\newtheorem{theorem}{Theorem}[section]
\newtheorem{lemma}[theorem]{Lemma}
\newtheorem{proposition}{Proposition}
\newtheorem{corollary}[theorem]{Corollary}
\theoremstyle{definition}
\newtheorem{definition}{Definition}
\newtheorem{remark}{Remark}
\newtheorem{conjecture}{Conjecture}
\newtheorem{problem}{Problem}
\begin{document}
	
	\title[Mahler's  Conjecture on Liouville numbers fails for matrices]{ Maillet's property and Mahler's  Conjecture on Liouville numbers fail for matrices }
	
	\author{Johannes Schleischitz}

	\thanks{Middle East Technical University, Northern Cyprus Campus, Kalkanli, G\"uzelyurt \\
		johannes@metu.edu.tr ; jschleischitz@outlook.com}

\begin{abstract}
      In the early 1900's, Maillet proved
    that the image of any Liouville number under a rational function with rational coefficients is again a Liouville number. The analogous result
    for quadratic Liouville matrices in higher dimension turns out to fail.
     In fact, using a result by Kleinbock and Margulis, we
     show that among analytic matrix functions in
     dimension $n\ge 2$,
     Maillet's invariance property is only true for M\"obius transformations with special coefficients. This implies that the analogue in higher dimension of an open question of Mahler on the existence of transcendental entire functions with Maillet's property has a negative answer.
     On the other hand, extending a topological argument of 
     Erd\H{o}s, we prove that for any injective continuous self mapping on the space of rectangular matrices, many Liouville matrices are mapped to Liouville matrices.
     Dropping injectivity, we consider setups similar to Alnia\c{c}ik and Saias and show that the situation depends on the matrix dimensions $m,n$. Finally we discuss extensions of a related result by Burger to quadratic matrices.
     We state several open problems along the way.
\end{abstract}

\maketitle

{\footnotesize{

		{\em Keywords}: Liouville number, irrationality exponent, transcendental function \\
		Math Subject Classification 2020: 30B10, 11J13}}

\section{ Introduction: Maillet's property and Mahler's problem }  \label{intro}

A Liouville number is an irrational real number $x$ for which
$|x-p/q|<q^{-N}$ has a rational solution $p/q$ for any $N$. 
Denote by $\mathscr{L}=\mathscr{L}_{1,1}$ the set of Liouville numbers.
It is well-known that
$\mathscr{L}$ is comeagre,
equivalently a dense $G_{\delta}$ set,
and of Hausdorff dimension $0$,
see~\cite{ox}.
Maillet~\cite{maillet} proved that if $f$ is a non-constant rational function with rational
coefficients then 
$f(a)\in \mathscr{L}$ for any $a\in \mathscr{L}$, or equivalently $f(\mathscr{L})\subseteq \mathscr{L}$. We will say a real function $f: I\subseteq \mathbb{R}\to \mathbb{R}$, $I$ a non-empty open interval, has the Maillet property if $f(I\cap \mathscr{L})\subseteq \mathscr{L}$ or equivalently $f^{-1}(\mathscr{L})=f^{-1}(\mathscr{L})\cap \mathscr{L}= \mathscr{L}\cap I$.
An open question
by Mahler~\cite{mahler} asks 
the following in the classical setup $I=\mathbb{R}$:

\begin{problem} \label{111}
    Do there exist 
transcendental entire functions having Maillet's property?
\end{problem}

 Recall a function $f$ is called
transcendental if $P(z,f(z))\ne 0$ for any non-trivial bivariate polynomial 
$P(X,Y)= \sum c_{i,j} X^i Y^j$ with complex coefficients $c_{i,j}$, otherwise
$f$ is called algebraic. Some advances
to Mahler's question by providing entire transcendetal functions $f$ mapping large subclasses
of Liouville numbers into 
$\mathscr{L}$ (or even itself)
were made in~\cite{m2, ds}, see
also~\cite{s}.
Conversely, claims mildly indicating towards a negative
answer of Problem~\ref{111} in context
of~\cite[Corollary~2.2]{ds},
were obtained in~\cite{LM, MRS, MS}.
Besides Maillet's result, it is known that any ``reasonable'' function enjoys the weaker property that, while not all, many Liouville numbers are mapped to Liouville
numbers. Indeed, as shown in~\cite{asa} 
for any continuous function $f$ as above that is nowhere constant, the set $f^{-1}(\mathscr{L})\cap \mathscr{L}$ is a dense $G_{\delta}$ subset of Liouville numbers on $I$. In fact we can intersect over countably many preimages under such functions $f_k$ at once. See also the preceding papers~\cite{erd, rie, sch}.

In this paper we want to discuss analogous
claims for Liouville matrices to be defined. 
Let $\Vert.\Vert$ denote the supremum norm on a Euclidean space of any dimension. 

\begin{definition} \label{lio}
We call a real $m\times n$ matrix $A$ Liouville matrix if 
\begin{equation} \label{eq:tiere}
    A\cdot \textbf{q}-\textbf{p} \ne \textbf{0}, \qquad (\textbf{q},\textbf{p})\in\mathbb{Z}^{n+m}\setminus \{ \textbf{0}\},
\end{equation}
 and $\Vert A\textbf{q}-\textbf{p}\Vert< \Vert \textbf{q}\Vert^{-N}$ has a solution in integer
vectors $\textbf{p}\in\mathbb{Z}^m,\textbf{q}\in\mathbb{Z}^n\setminus \{ \textbf{0}\}$ for any $N$. We denote 
by $\mathscr{L}_{m,n}$ the set 
of $m\times n$ Liouville matrices.
\end{definition}

This agrees
with the definition of Liouville numbers if $m=n=1$, so $\mathscr{L}=\mathscr{L}_{1,1}$. 
Studying small values of $\Vert A\textbf{q}-\textbf{p}\Vert$ is an intensely studied topic in Diophantine approximation, e.g.~\cite{ngm},
so the definition appears very natural.
Property \eqref{eq:tiere} means that the sequence of best approximating integer vectors associated to $A$ does not terminate (called good matrices 
in~\cite{ngm}), we consider
it more natural than the less restrictive condition
$A\notin \mathbb{Q}^{m\times n}$, see however Theorem~\ref{neu} on the latter. Since approximation becomes easier
the more free variables we have 
and the fewer equations we need to satisfy, the following observations are obvious but may be helpful for the reader.

\begin{proposition}  \label{hauser}
    If $A\in \mathscr{L}_{m,n}$ then any line $(a_{j,1},\ldots,a_{j,n})$ of $A$ either has $\mathbb{Z}$-linearly dependent coordinates together with $\{1\}$ or lies in $\mathscr{L}_{1,n}$. If some column of
    $A$ lies in $\mathscr{L}_{m,1}$
    and $A$ satisfies \eqref{eq:tiere},
    then $A\in \mathscr{L}_{m,n}$.
\end{proposition}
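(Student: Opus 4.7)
The plan is to read off both statements directly from Definition~\ref{lio} by using that the supremum norm dominates each coordinate, and conversely that restricting the integer vector $\mathbf{q}$ to a coordinate axis reduces the matrix problem to a column problem. First I would handle the statement about rows. Fix a row index $j$ and suppose the alternative fails, i.e.\ $1,a_{j,1},\ldots,a_{j,n}$ are $\mathbb{Z}$-linearly independent. The key observation is that the $j$-th coordinate of $A\mathbf{q}-\mathbf{p}$ equals $\mathbf{a}_j\cdot \mathbf{q}-p_j$ where $\mathbf{a}_j=(a_{j,1},\ldots,a_{j,n})$, so
\[
|\mathbf{a}_j\cdot \mathbf{q}-p_j|\;\le\;\Vert A\mathbf{q}-\mathbf{p}\Vert.
\]
Given any $N$, apply $A\in\mathscr{L}_{m,n}$ to produce $(\mathbf{q},\mathbf{p})$ with $\mathbf{q}\neq \mathbf{0}$ and $\Vert A\mathbf{q}-\mathbf{p}\Vert<\Vert \mathbf{q}\Vert^{-N}$; taking the $j$-th coordinate yields $|\mathbf{a}_j\cdot \mathbf{q}-p_j|<\Vert \mathbf{q}\Vert^{-N}$ with $(\mathbf{q},p_j)\ne\mathbf{0}$. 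Since $N$ was arbitrary and the non-vanishing condition \eqref{eq:tiere} for $\mathbf{a}_j$ is exactly the assumed $\mathbb{Z}$-linear independence, one concludes $\mathbf{a}_j\in \mathscr{L}_{1,n}$.

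For the second half I would go in the reverse direction. Assume the $k$-th column $\mathbf{a}^k=(a_{1,k},\ldots,a_{m,k})^{\top}$ is in $\mathscr{L}_{m,1}$ and that $A$ satisfies \eqref{eq:tiere}. For each $N$, by the assumption on $\mathbf{a}^k$ there exist $q\in \mathbb{Z}\setminus\{0\}$ and $\mathbf{p}\in\mathbb{Z}^m$ with $\Vert q\mathbf{a}^k-\mathbf{p}\Vert<|q|^{-N}$. Setting $\mathbf{q}=q\,\mathbf{e}_k\in\mathbb{Z}^n$, where $\mathbf{e}_k$ denotes the $k$-th standard basis vector, one has $A\mathbf{q}=q\mathbf{a}^k$ and $\Vert\mathbf{q}\Vert=|q|$, so $\Vert A\mathbf{q}-\mathbf{p}\Vert<\Vert\mathbf{q}\Vert^{-N}$. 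Combined with the separately imposed condition \eqref{eq:tiere} this gives $A\in\mathscr{L}_{m,n}$.

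There is essentially no obstacle beyond careful bookkeeping of the non-triviality conditions. The only point worth emphasising is why \eqref{eq:tiere} has to be assumed explicitly in the column statement: knowing $\mathbf{a}^k\in\mathscr{L}_{m,1}$ only guarantees $\mathbf{a}^k q-\mathbf{p}\ne \mathbf{0}$ for $(q,\mathbf{p})\ne \mathbf{0}$, which forbids a $\mathbb{Z}$-relation using just the $k$-th coordinate of $\mathbf{q}$, but does not preclude relations involving the other columns; hence \eqref{eq:tiere} for $A$ itself cannot be deduced and must be part of the hypotheses. Symmetrically, in the row statement the $\mathbb{Z}$-linear independence of $1,a_{j,1},\ldots,a_{j,n}$ is precisely the condition needed to promote the approximation inequality from the row to membership in $\mathscr{L}_{1,n}$; if it fails, no further conclusion about that row is being claimed.
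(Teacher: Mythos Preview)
Your argument is correct and is exactly the intended one; the paper itself does not spell out a proof but calls the observations ``obvious,'' which is precisely the domination/restriction reasoning you wrote down. The only cosmetic point is that in the row direction you do not actually need to record $(\mathbf{q},p_j)\ne\mathbf{0}$, since the approximation part of Definition~\ref{lio} only requires $\mathbf{q}\ne\mathbf{0}$, which you already have.
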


 In general
$\mathscr{L}_{n,m}\ne \mathscr{L}_{m,n}^{t}$, superscript $t$ denoting the transpose, moreover $A\in\mathscr{L}_{n,m}$ by no means implies that its entries are Liouville numbers, nor is the converse true.

\section{ Mahler's question has a negative answer for matrices} \label{s02}

We focus on $m=n$ in this section.\footnote{In this
case we may additionally impose for Liouville matrices the constraint of being transcendental, that is imposing $P(A)\ne \textbf{0}$ for any non-zero $P\in\mathbb{Z}[X]$. 
This would exclude especially nilpotent matrices (it is not hard to construct nilpotent Liouville matrices in our setting 
for $n\ge 2$). 
Our
results below remain valid in this setting as well with minor modifications in some proofs. Another reasonable restriction would be to only consider invertible matrices. }
Given $I\ni 0$ an open interval
and any analytic function $f: I\to \mathbb{R}$, we extend it
to (a non-empty open connected subset of) the ring of $n\times n$ matrices via the same local power series expansion. More precisely, we know that
$f(z)= \sum c_j z^j$ converges
absolutely in some subinterval $(-r,r)\subseteq I$, $r>0$, and for $A\in \mathbb{R}^{n\times n}$ we denote by $\textswab{f}$ the extension of $f$ to the matrix ring via
\begin{equation}  \label{eq:ex}
    \textswab{f}(A)= \sum_{j=0}^{\infty} c_j A^j,
\end{equation}
which converges absolutely as soon as $A$ has operator norm less than $r$. Denoting $I_n$ the identity matrix,
this setup appears natural
and contains for example 
any rational function $(c_0 I_n+c_1 A+\ldots+c_uA^u)\cdot(d_0 I_n+\cdots+d_v A^v)^{-1}$, $d_0\ne 0$,
the matrix
exponential function $e^A=\sum_{j\ge 0} A^j/j!$ and 
its inverse $\log (I_n+A)= \sum_{j\ge 1} (-1)^{j+1} A^{j}/j$.

The following is not hard to see, we provide a sketch of the proof in~\S~\ref{unten}.

\begin{proposition}  \label{pro}
    If $A\in \mathscr{L}_{n,n}$ 
    and 
    $R_1, R_2, S, T\in \mathbb{Q}^{n\times n}$
    and $R_i$ are invertible, then
    \begin{equation}    \label{eq:fini}  
    R_1AR_2+S, \qquad (R_1AR_2+T)^{-1}+ S
    \end{equation}
    again belong to $\mathscr{L}_{n,n}$ (if defined in the latter case).
\end{proposition}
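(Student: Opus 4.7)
The plan is to decompose the statement into two elementary preservation properties of $\mathscr{L}_{n,n}$: an \emph{affine} step $A \mapsto R_1 A R_2 + S$ with $R_1, R_2 \in GL_n(\mathbb{Q})$ and $S \in \mathbb{Q}^{n\times n}$, and an \emph{inversion} step $A \mapsto A^{-1}$ when defined. The first formula in \eqref{eq:fini} is directly the affine step, while the second is obtained by composing the affine step (with $T$), inversion, and a translation $M \mapsto M + S$ (the affine case with $R_1=R_2=I_n$).

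For the affine step set $B := R_1 A R_2 + S$ and fix a positive integer $k$ such that $kR_1$, $kR_2^{-1}$ and $kSR_2^{-1}$ all have integer entries; this is possible because all three matrices are rational. For an integer pair $(\textbf{q},\textbf{p})$ with $\textbf{q}\ne 0$ realising $\|A\textbf{q}-\textbf{p}\|<\|\textbf{q}\|^{-N}$, define
\begin{equation*}
\textbf{q}' := (kR_2^{-1})\textbf{q}, \qquad \textbf{p}' := (kR_1)\textbf{p} + (kSR_2^{-1})\textbf{q},
\end{equation*}
both of which lie in $\mathbb{Z}^n$ by construction. A direct substitution yields the key identity
\begin{equation*}
B\textbf{q}' - \textbf{p}' = kR_1(A\textbf{q}-\textbf{p}),
\end{equation*}
so $\|B\textbf{q}' - \textbf{p}'\| \le k\|R_1\|\cdot\|\textbf{q}\|^{-N}$ and $\|\textbf{q}'\| \le k\|R_2^{-1}\|\cdot\|\textbf{q}\|$. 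Taking $\|\textbf{q}\|$ sufficiently large and replacing $N$ by $N-1$ absorbs both multiplicative constants, giving $\|B\textbf{q}' - \textbf{p}'\|<\|\textbf{q}'\|^{-(N-1)}$; since $N$ is arbitrary and $\textbf{q}'\ne 0$ by invertibility of $R_2$, this yields the approximation condition for $B$. Condition \eqref{eq:tiere} for $B$ follows by the same algebra in reverse: any integer solution $B\textbf{q}^*=\textbf{p}^*$ with $(\textbf{q}^*,\textbf{p}^*)\ne 0$ can be rearranged to $A(R_2\textbf{q}^*) = R_1^{-1}(\textbf{p}^* - S\textbf{q}^*)$, and a scaling clearing denominators produces a nontrivial integer solution of $A\tilde{\textbf{q}}=\tilde{\textbf{p}}$, contradicting \eqref{eq:tiere} for $A$; the nontriviality uses invertibility of $R_1,R_2$.

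For the inversion step, set $C := A^{-1}$. The identity $C\textbf{p}-\textbf{q} = A^{-1}(\textbf{p}-A\textbf{q})$ gives $\|C\textbf{p}-\textbf{q}\| \le \|A^{-1}\|\cdot\|A\textbf{q}-\textbf{p}\|$. Invertibility of $A$ yields the two-sided bound $c\|\textbf{q}\| \le \|\textbf{p}\| \le C'\|\textbf{q}\|$ for constants depending only on $A$, provided $\|A\textbf{q}-\textbf{p}\|$ is small. Hence swapping $\textbf{q}':=\textbf{p}$, $\textbf{p}':=\textbf{q}$ produces integer vectors with $\textbf{q}'\ne 0$ and the required decay rate for $C$, while \eqref{eq:tiere} for $C$ is immediate as $C\textbf{q}^*=\textbf{p}^*$ rewrites as $A\textbf{p}^*-\textbf{q}^*=0$.

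The main obstacle is the bookkeeping of rational denominators in the affine step to guarantee that the new pair $(\textbf{q}',\textbf{p}')$ is integral and non-zero; the accompanying multiplicative constants are harmless because the Liouville exponent $N$ can be taken arbitrarily large.
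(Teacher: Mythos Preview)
Your proof is correct and follows essentially the same approach as the paper's: decompose the two expressions in \eqref{eq:fini} into elementary operations preserving $\mathscr{L}_{n,n}$, and for each operation clear rational denominators to transform good integer approximation pairs for $A$ into good integer approximation pairs for the image. The only cosmetic difference is that the paper treats $A\mapsto RA$, $A\mapsto AR$, $A\mapsto A+T$ as three separate steps while you bundle them into a single affine step $A\mapsto R_1AR_2+S$; the inversion step and the verification of \eqref{eq:tiere} via invertibility of $R_1,R_2$ are handled identically.
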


\begin{remark}
    The regularity of $R_i$ is only needed
    to avoid rational matrices in \eqref{eq:fini} thereby not satisfying \eqref{eq:tiere}, especially if $A$ is not invertible.
    A refined claim on invariance of the irrationality exponent defined below can be obtained. Moreover some claims can be generalized to rectangular matrices.
\end{remark}

Let us extend naturally Maillet's property for real $f$ 
to given $n\ge 1$ via imposing
$\textswab{f}(\mathscr{L}_{n,n})\subseteq \mathscr{L}_{n,n}$ for the induced $\textswab{f}$ from \eqref{eq:ex}.
Taking in \eqref{eq:fini} diagonal matrices
\[
R_1=rI_n,\; (r\ne 0),\qquad  R_2=I_n,\qquad S=sI_n,\qquad T=tI_n,
\]
Maillet's property holds for any $n\ge 1$ and the two types of algebraic functions
\begin{equation} \label{eq:fro}
f(z)= r z+s, \qquad f(z)=s+ (rz+t)^{-1}
\end{equation}
with $0\ne r,s,t\in\mathbb{Q}$. 
Equivalently, $f$ is a M\"obius map
with rational coefficients
\begin{equation} \label{eq:four}
    f(z)=\tau_{a,b,c,d}(z):= \frac{ az+b}{cz+d}, \qquad a,b,c,d\in\mathbb{Q},\; ad-bc\ne 0.
\end{equation}

However,
we show that 
for $n\ge 2$, Maillet's property fails
for any other analytic function not of the form \eqref{eq:fro} with real parameters. Thereby we get
an almost comprehensive description of analytic functions with Maillet's property for $n\ge 2$, leaving  only a small gap  of the case \eqref{eq:fro} 
(or \eqref{eq:four}) with non-rational constants.
In fact we show
a refined claim that needs some more preparation. For general rectangular matrices $A\in \mathbb{R}^{m\times n}$, let $\omega^{m\times n}(A)$ be the irrationality 
exponent of $A$,
defined as the supremum of $w$ such that
\[
\Vert A \textbf{q}-\textbf{p}\Vert < \Vert \textbf{q}\Vert^{-w}
\]
for infinitely many integer vector pairs $\textbf{p}\in \mathbb{Z}^m, \textbf{q}\in \mathbb{Z}^n$. We should notice that
\[
\omega^{m\times n}(A)\ge \frac{n}{m}
\]
for any real $A$ by a well-known variant Dirichlet's Theorem, in particular for 
quadratic matrices the lower bound is $1$. 
Let us just write $\omega(A)$ when the dimensions are clear.
A real $m\times n$ matrix $A$ is a Liouville matrix if and only if \eqref{eq:tiere} holds and $\omega(A)=\infty$. In the sequel we shall always naturally identify
\[
\mathbb{R}^{m\times n}\cong \mathbb{R}^{mn},
\]
where we can assume that the
lines of the matrix are put one by one into a vector (we can choose any coordinate ordering, however we must stick to it as $\mathscr{L}_{m,n}$ is not invariant under entry bijections
as soon as $\min\{ m,n\}\ge 2$).
This induces a topology and a Lebesgue measure (generally Hausdorff measures) on the matrix set.
Our main result is

\begin{theorem}  \label{1}
    Let $n\ge 2$ be an integer and $I\subseteq \mathbb{R}$ an open interval containing $0$. 
    Let $f_k: I\to \mathbb{R}$ be any sequence of real analytic functions
    not of the form \eqref{eq:fro} for real numbers $r, s, t$ (possibly $0$) and $\textswab{f}_k$ their extensions as in \eqref{eq:ex}.
    Then there exists a set 
    $\Omega\subseteq \mathscr{L}_{n,n}\subseteq \mathbb{R}^{n^2}$ of Hausdorff dimension
 $\dim_H(\Omega)=(n-2)^2+1$ so that
 for any $A\in \Omega$ and $k\ge 1$, 
	$\textswab{f}_k(A)$ is defined but
 $\textswab{f}_k(A)\notin \mathscr{L}_{n,n}$, thus $\cup_{k\ge 1} \textswab{f}_k^{-1}(\mathscr{L}_{n,n})\cap \Omega=\emptyset$. 
 In short
 \[
 \dim_H\left( \bigcap_{k\ge 1} \textswab{f}_k^{-1}(\mathscr{L}_{n,n}^c)\cap \mathscr{L}_{n,n}\right)\ge (n-2)^2+1.
 \]
 In fact for any $A\in \Omega$
	\[
	\omega(\textswab{f}_k(A)) \le 2, \qquad k\ge 1.
	\]
 If $f_k(0)=0$ for some $k$, then equality $\omega(\textswab{f}_k(A)) = 2$ can be obtained.
\end{theorem}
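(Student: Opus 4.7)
The plan is to construct $\Omega$ as a family of block-diagonal matrices whose Liouville property is guaranteed by a fixed Liouville number placed in one entry, while the image under each $\textswab{f}_k$ carries a ``thin'' row that Kleinbock--Margulis extremality can control. Fix once and for all a small Liouville number $L$ and a small nonzero rational $\epsilon$, with absolute values small enough that every matrix entering the construction has operator norm less than the radii of convergence of all the $f_k$ at $0$ (these radii admit a uniform positive lower bound since each $f_k$ is analytic on the fixed interval $I \ni 0$). For $\alpha$ in a small interval around $0$ and $E$ in a small ball in $\mathbb{R}^{(n-2)\times(n-2)}$, set
\[
B(\alpha) = \begin{pmatrix} \alpha & \epsilon \\ 0 & L \end{pmatrix}, \qquad A(\alpha,E) = \begin{pmatrix} B(\alpha) & 0 \\ 0 & E \end{pmatrix}.
\]
First I would verify that $A(\alpha,E)\in\mathscr{L}_{n,n}$ whenever $\alpha$ is irrational and $E$ has no nontrivial integer relations $E\textbf{q}_E=\textbf{p}_E$, both full Lebesgue-measure conditions. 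Indeed, integer vectors $\textbf{q}$ supported on the second coordinate, with the second entry being a Liouville denominator of $L$, produce $\|A\textbf{q}-\textbf{p}\| < \|\textbf{q}\|^{-N}$ for any $N$; condition \eqref{eq:tiere} is verified in the same spirit.

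By the standard formula for an analytic function on a $2\times 2$ upper-triangular matrix with distinct diagonal entries,
\[
\textswab{f}_k(B(\alpha)) = \begin{pmatrix} f_k(\alpha) & \epsilon\cdot \dfrac{f_k(\alpha)-f_k(L)}{\alpha - L} \\[4pt] 0 & f_k(L) \end{pmatrix},
\]
so the top row of $\textswab{f}_k(A(\alpha,E))$ is the $1\times n$ vector $(f_k(\alpha),\epsilon g_k(\alpha), 0, \ldots, 0)$ with $g_k(\alpha):=(f_k(\alpha)-f_k(L))/(\alpha-L)$. Since trailing zeros do not affect the irrationality exponent of a row vector, and since any row bounds the full matrix exponent from above, it suffices to establish
\[
\omega^{1\times 2}\bigl(f_k(\alpha),\,\epsilon g_k(\alpha)\bigr)\,\le\,2
\]
for almost every $\alpha$ and every $k\ge 1$.

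This is where I would invoke Kleinbock--Margulis: the real analytic curve $\alpha\mapsto(f_k(\alpha),g_k(\alpha))$ in $\mathbb{R}^2$ is nondegenerate (not contained in an affine line) iff the three functions $1, f_k, g_k$ are $\mathbb{R}$-linearly independent in $\alpha$. A short calculation --- multiplying any relation $c_0+c_1 f_k(\alpha)+c_2 g_k(\alpha)\equiv 0$ through by $\alpha-L$ and solving for $f_k(\alpha)$ --- shows linear dependence is equivalent to $f_k$ being a M\"obius transformation in $\alpha$, which by the identification preceding \eqref{eq:four} means $f_k$ has the form \eqref{eq:fro}. Since by hypothesis no $f_k$ is of this form, each curve is nondegenerate, and Kleinbock--Margulis forces $\omega^{1\times 2}(f_k(\alpha),\epsilon g_k(\alpha))=2$ on a Lebesgue-conull set of $\alpha$. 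A countable intersection over $k$ still has full measure, and combining with a full-measure set of $E$ yields $\dim_H\Omega = 1+(n-2)^2 = (n-2)^2+1$.

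The main obstacle is the algebraic identification of the degenerate case of the auxiliary curve with \emph{precisely} the excluded M\"obius family \eqref{eq:fro}, as opposed to a larger class such as exponentials --- the quotient structure $g_k(\alpha)=(f_k(\alpha)-f_k(L))/(\alpha-L)$ centered at a Liouville $L$ is tailored exactly for this. For the equality $\omega(\textswab{f}_k(A))=2$ when $f_k(0)=0$, I would take $E=0$ and additionally choose $L$ in the dense $G_\delta$ set $\mathscr{L}\cap\bigcap_k f_k^{-1}(\mathscr{L})$ (nonempty by an Erd\H{o}s / Alniac\c{c}ik--Saias argument for countably many continuous nowhere-constant maps); a delicate matching of integer approximations of $f_k(L)$ with the $\omega^{1\times 2}\ge 2$ Dirichlet lower bound on the first row then realizes $\omega^{n\times n}(\textswab{f}_k(A))\ge 2$, complementing the upper bound.
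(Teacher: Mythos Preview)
Your overall strategy coincides with the paper's: block-diagonal matrices $\mathrm{diag}(A_2,E)$ carrying a fixed Liouville number in the $2\times 2$ block, Kleinbock--Margulis applied to the planar curve swept by the first row of $\textswab{f}_k(A_2)$, and the identification of the degenerate case with the M\"obius family. Your block $B(\alpha)=\bigl(\begin{smallmatrix}\alpha & \epsilon\\ 0 & L\end{smallmatrix}\bigr)$ differs from the paper's $\bigl(\begin{smallmatrix} a & b \\ 0 & 0\end{smallmatrix}\bigr)$ (Liouville $b$, parameter $a$); your divided-difference formula gives an arguably cleaner linear-algebraic non-degeneracy check than the paper's differential-equation computation $s_{zz}r_z-s_zr_{zz}\equiv 0$.

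There are, however, two genuine gaps. First, the assertion that ``trailing zeros do not affect the irrationality exponent of a row vector'' is false: for $n\ge 3$ the row $(v_1,v_2,0,\ldots,0)$ has $\omega^{1\times n}=\infty$ (take $\textbf{q}$ supported on a zero coordinate and $p=0$), so bounding $\omega^{n\times n}(\textswab{f}_k(A))$ by this row is vacuous. The paper instead uses the block decoupling
\[
\omega^{n\times n}(\textswab{f}_k(A))=\max\bigl\{\omega^{2\times 2}(\textswab{f}_k(B(\alpha))),\ \omega^{(n-2)\times(n-2)}(\textswab{f}_k(E))\bigr\},
\]
which then also forces you to impose the full-measure condition $\omega^{(n-2)\times(n-2)}(\textswab{f}_k(E))=1$ for every $k$ --- a constraint on $E$ that you never state.

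Second, your treatment of the equality case fails. Taking $E=0$ makes $A(\alpha,0)$ violate \eqref{eq:tiere} (with $\textbf{q}$ supported on the last $n-2$ coordinates and $\textbf{p}=\textbf{0}$), so $A\notin\mathscr{L}_{n,n}$. Even for $n=2$, the second row of $\textswab{f}_k(B(\alpha))$ is $(0,f_k(L))$, not $(0,f_k(0))$; the hypothesis $f_k(0)=0$ is irrelevant to your block, and the hand-waved ``delicate matching'' does not yield $\omega^{2\times 2}\ge 2$. In the paper's block the second row of $\textswab{f}_k(A_2)$ is precisely $(0,f_k(0))$, which vanishes when $f_k(0)=0$; the second constraint is then met with $p_2=0$ and equality $\omega^{2\times 2}=\omega^{1\times 2}(\text{first row})=2$ follows immediately from Kleinbock--Margulis.

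(A minor point: the claimed uniform positive lower bound on the power-series radii is false --- take $f_k(z)=1/(1+k^2z^2)$, analytic on all of $\mathbb{R}$ with radius $1/k$ at $0$ --- though for triangular matrices with spectrum in $I$ one can repair this via the holomorphic functional calculus.)
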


\begin{remark}
Note that we exclude all real $r,s,t$, not only rationals. We believe 
that when choosing any $r,s, t$ within the uncountable subset 
of $\mathscr{L}_{1,1}$
of so-called 
strong Liouville numbers,
the functions $\textswab{f}$ derived from $f$ in \eqref{eq:fro} satisfy $\textswab{f}(\mathscr{L}_{n,n})\subseteq \mathscr{L}_{n,n}$. By a result of Petruska~\cite{petruska} this is true
    for $n=1$, indeed if
    $c_0, c_1$ are strong Liouville numbers then e.g.
    $f(a)=c_0+c_1 a$
    is a Liouville number for any Liouville number $a$. In fact
    a weaker property of $c_i$ being so-called semi-strong
    Liouville numbers~\cite{al2} suffices.
    The general case $n\ge 1$ seems to admit a similar proof of $\textswab{f}(A)=c_0 I_n+c_1A\in \mathscr{L}_{n,n}$ for any $A\in \mathscr{L}_{n,n}$.
\end{remark}

\begin{remark} We believe that 
$\omega(\textswab{f}_k(A))=1$ can be 
reached as well in the framework of the theorem, possibly even badly approximable, i.e. 
$\Vert \textswab{f}_k(A)\textbf{q}-\textbf{p}\Vert\ge c_k \Vert \textbf{q}\Vert^{-1}$ for any integer vectors $\textbf{p},\textbf{q}\ne \textbf{0}$ and absolute $c_k>0$ (maybe even with uniform $c_k=c$).
However even considering only one function this would require new ideas in the proof.
\end{remark}

There is no reason to believe that the stated lower bound on $\dim_H(\Omega)$ is sharp, see also~\S~\ref{s4} below. On the other hand, topologically $\Omega$ is small. Indeed, it follows from Theorem~\ref{gdel} below
that even for only one function $f=f_1$ it must be meagre.
For our proof of Theorem~\ref{1}, it will be convenient to
use a deep result by Kleinbock and Margulis~\cite{km}. However, weaker partial claims 
can be obtained with elementary 
methods.

Since the functions in \eqref{eq:fro}
(or \eqref{eq:four}) are algebraic, Theorem~\ref{1} also shows that
Mahler's Problem~\ref{111} has a negative answer in higher dimension.

\begin{corollary} \label{c}
Let $n\ge 2$. For any transcendental entire function $f$,
its extension $\textswab{f}$ to the $n\times n$ matrix ring via \eqref{eq:ex} 
does not have the Maillet property.
\end{corollary}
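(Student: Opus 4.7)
The plan is to reduce Corollary~\ref{c} to Theorem~\ref{1} by checking that any transcendental entire function necessarily lies outside the exceptional family \eqref{eq:fro}. Since $f$ is entire and for Maillet's property to be meaningful must satisfy $f(\mathbb{R})\subseteq\mathbb{R}$, its Taylor expansion at $0$ has real coefficients and infinite radius of convergence; hence the matrix extension $\textswab{f}$ via \eqref{eq:ex} is defined on all of $\mathbb{R}^{n\times n}$, and the hypotheses of Theorem~\ref{1} are satisfied as soon as we can certify that $f$ does not coincide with any function of the form \eqref{eq:fro}.

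For this algebraicity check, I would observe that every member of the family \eqref{eq:fro} is annihilated by a non-trivial bivariate polynomial in the sense defined in the paper. The affine function $f(z)=rz+s$ satisfies $P(z,f(z))\equiv 0$ for $P(X,Y)=Y-rX-s$. The function $f(z)=s+(rz+t)^{-1}$ with $r\ne 0$ is killed by $P(X,Y)=(Y-s)(rX+t)-1$, while for $r=0$ it reduces to the constant $s+1/t$ and is annihilated by $P(X,Y)=Y-s-1/t$. In every case the polynomial $P$ is non-trivial, so the function is algebraic. Consequently a transcendental entire $f$ cannot be of the form \eqref{eq:fro} for any choice of real numbers $r,s,t$.

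Applying Theorem~\ref{1} to the single-function family $f_1:=f$ on $I=\mathbb{R}$ then yields a set $\Omega\subseteq\mathscr{L}_{n,n}$ of Hausdorff dimension at least $(n-2)^2+1\ge 1$, in particular non-empty, with $\textswab{f}(A)\notin\mathscr{L}_{n,n}$ for every $A\in\Omega$. Picking any such $A$ exhibits a Liouville matrix that $\textswab{f}$ sends outside $\mathscr{L}_{n,n}$, so the inclusion $\textswab{f}(\mathscr{L}_{n,n})\subseteq\mathscr{L}_{n,n}$ fails. There is no genuine obstacle here: the substantive work is absorbed into Theorem~\ref{1}, and the corollary only requires the elementary observation above that the family \eqref{eq:fro} consists entirely of algebraic functions.
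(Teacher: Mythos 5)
Your proposal is correct and matches the paper's (very terse) argument: the paper simply notes that the functions in \eqref{eq:fro} are algebraic, so Theorem~\ref{1} applies to any transcendental entire $f$, exactly as you do. Your explicit annihilating polynomials and the remark on real coefficients just fill in details the paper leaves implicit.
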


It is however not clear if Corollary~\ref{c} can be regarded a strong indication for a negative answer in  the case $n=1$ (Mahler's Problem~\ref{111}) as well. Indeed,
the following remarks illustrate that the situation over the matrix ring is different from the scalar one.

\begin{remark} \label{remrand}
The proof of Theorem~\ref{1} shows that we can choose the matrix in the corollary an ultra-Liouville matrix
defined analogously to~\cite{m2} for $n=1$,
so the main claim from~\cite{m2} of the invariance of this set under certain entire transcendental maps
fails for $n\ge 2$ as well. The result \cite[Theorem~4.3]{ds} stating that the parametrized subclasses of 
Liouville numbers from~\cite[Definition 3.1]{ds} are mapped to Liouville numbers for certain transcendental functions $f$, fails for $n\ge 2$ as well by similar arguments.
\end{remark}

\begin{remark}
    In~\cite[Corollary~2.2]{ds} it is shown that if an entire function $f:\mathbb{R}\to\mathbb{R}$ satisfies $f(\mathbb{Q})\subseteq \mathbb{Q}$ with denominators of order $\rm{denom}(f(p/q))\ll q^{N}$ for some absolute $N$, then 
    $f$ has Maillet's property.
    This property fails over the matrix ring. Indeed,
    for any polynomial $f\in\mathbb{Z}[X]$ the according properties
    $\textswab{f}(\mathbb{Q}^{n\times n})\subseteq \mathbb{Q}^{n\times n}$ 
    and $\rm{denom}(\textswab{f}(A/q))\ll q^N$ for $A\in\mathbb{Z}^{n\times n}$
    hold (with $N=\deg f$), but 
    $f$ does not have Maillet's property by Theorem~\ref{1} if $\deg f\ge 2$.
    The reason is basically that
    $A$ being Liouville according to Definition~\ref{lio} does not imply that $A$ is well approximable by rational matrices $B/q$, $B\in\mathbb{Z}^{n\times n}$, $q\in\mathbb{N}$, as a function of $q$ with respect to supremum norm on $\mathbb{R}^{n\times n}$, see also
    Proposition~\ref{hauser}.
\end{remark}

The claim of Corollary~\ref{c} holds for
any transcendental analytic function defined on any open neighborhood of $0$.
Since the setting above is almost completely solved, 
we ask the following variants of Mahler's question for wider classes of functions.

\begin{problem}
   Characterize all functions
    $f: \mathbb{R}^{m\times n}\to \mathbb{R}^{m\times n}$  with the Maillet property that are (a) continuous, (b) analytic 
   in the sense that each of the $mn$ output entry functions is a power series
in the $mn$ entries of $A$.
\end{problem}

For $m=n=1$ part (b) just reduces to Problem~\ref{111}. Any piecewise defined continuous function that is locally a rational function function
with rational coefficients is an example for (a). Possibly the set for part (a) is too large to allow a natural
classification. 

\section{ Continuous images of Liouville
matrices }  \label{cfu}

\subsection{ A converse property for analytic functions and a conjecture }

By contrast to
the results in~\S~\ref{s02}, while not all,
there is still a large subset of quadratic Liouville matrices of any dimension 
that are mapped to Liouville matrices
under any given non-constant analytic function $f$ (more precisely by its extension $\textswab{f}$). Indeed, via a diagonalization argument and a result
from~\cite{asa} for $n=1$ (see~\S~\ref{intro}),
we obtain the following.

\begin{theorem} \label{t2}
	Let $n\ge 2$ an integer and $I\subseteq \mathbb{R}$ an open interval containing $0$.  Let $f_k: I\to \mathbb{R}$ be any sequence of non-constant real analytic functions
    on $I$ and $\textswab{f}_k$ their extensions  
	via \eqref{eq:ex}. Then there is a 
set $\Omega\subseteq \mathscr{L}_{n,n}\subseteq \mathbb{R}^{n^2}$ of Hausdorff dimension $(n-1)^2$ so that  for any $A\in \Omega$ and any $k\ge 1$, we have $\textswab{f}_k(A)\in \mathscr{L}_{n,n}$. In other words the set
\begin{equation} \label{eq:T}
\bigcap_{k\ge 1} \textswab{f}_k^{-1}(\mathscr{L}_{n,n})\cap \mathscr{L}_{n,n} 
\end{equation}
has Hausdorff dimension at least $(n-1)^2$, in particular it is not empty.
\end{theorem}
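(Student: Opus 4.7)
The plan is to exploit block-diagonal structure: for a single carefully chosen Liouville scalar $a\in\mathscr{L}$ and arbitrary $B\in \mathbb{R}^{(n-1)\times(n-1)}$ near $0$, I will consider
\[
A = \begin{pmatrix} a & \mathbf{0}^{t}\\ \mathbf{0} & B\end{pmatrix} \in \mathbb{R}^{n\times n}.
\]
The virtue of this family is that powers of block-diagonal matrices are block-diagonal, so the extension~\eqref{eq:ex} evaluates termwise as
\[
\textswab{f}_k(A)=\begin{pmatrix} f_k(a) & \mathbf{0}^{t}\\ \mathbf{0} & \textswab{f}_k(B)\end{pmatrix}
\]
whenever the series converges, and the $(n-1)^2$ free entries of $B$ will supply the claimed Hausdorff dimension.

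To select $a$, I would invoke the scalar result recalled in~\S\ref{intro}: each non-constant real analytic $f_k$ is nowhere constant on $I$ by the identity theorem, so by~\cite{asa} the set
\[
E:=\mathscr{L}\cap I\cap \bigcap_{k\ge 1} f_k^{-1}(\mathscr{L})
\]
is a dense $G_{\delta}$ subset of $\mathscr{L}\cap I$. Fix any $a\in E$ with $|a|$ so small that every $\textswab{f}_k$ converges on some common open neighbourhood $V\subseteq \mathbb{R}^{(n-1)\times(n-1)}$ of $0$. For any $B\in V$ additionally satisfying~\eqref{eq:tiere} both for itself and for each $\textswab{f}_k(B)$---a condition removing only a countable union of proper real-analytic subvarieties of $V$---the first column of $A$ is $(a,0,\ldots,0)^{t}$, which lies in $\mathscr{L}_{n,1}$ because $a$ is Liouville, so Proposition~\ref{hauser} yields $A\in \mathscr{L}_{n,n}$. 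The identical argument applied to $\textswab{f}_k(A)$, whose first column $(f_k(a),0,\ldots,0)^{t}$ lies in $\mathscr{L}_{n,1}$ by the choice of $a\in E$, gives $\textswab{f}_k(A)\in \mathscr{L}_{n,n}$ for every $k$.

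Letting $\Omega$ denote the resulting set of such matrices $A$, it lies in an $(n-1)^2$-dimensional affine subspace of $\mathbb{R}^{n^2}$ and contains a nonempty open subset of that subspace minus a meagre, measure-zero set, so $\dim_H(\Omega)=(n-1)^2$. The main anticipated obstacle is ensuring that all $\textswab{f}_k$ converge at one common matrix argument, since the radii of the Taylor series of $f_k$ at $0$ need not be bounded below uniformly in $k$; the density of $E$ in every neighbourhood of $0$ makes it possible to shrink $|a|$ and $V$ compatibly, while the subsequent genericity verification of~\eqref{eq:tiere} for $B$ and each $\textswab{f}_k(B)$ is routine because each individual constraint $\textswab{f}_k(B)q'=p'$ with nonzero $(q',p')\in \mathbb{Z}^{2(n-1)}$ cuts out a proper real-analytic subvariety of $V$.
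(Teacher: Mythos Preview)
Your proposal is correct and follows essentially the same route as the paper: block-diagonal matrices $A=\mathrm{diag}(a,B)$ with $a$ chosen via the scalar result of~\cite{asa}, plus a genericity argument on $B$ to ensure~\eqref{eq:tiere} for $\textswab{f}_k(A)$. The only difference is packaging: the paper isolates your ``routine'' step---that each constraint $\textswab{f}_k(B)\mathbf{q}'=\mathbf{p}'$ cuts out a \emph{proper} analytic subvariety---as a separate Lemma~\ref{depat}, proved via a diagonalisation argument; for the specific affine subspaces arising here your implicit reasoning (test on scalar matrices $B=tI_{n-1}$, forcing $f_k$ constant) already suffices.
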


While the main idea of the proof is 
simple, the condition \eqref{eq:tiere}
causes some technicality, so we move it to~\S~\ref{proof}.
 The lower bound on the Hausdorff dimension is presumably again not optimal, possibly it is the same value $n(n-1)$ as for the full
set $\mathscr{L}_{n,n}$, see for example~\cite{bv} for a considerably stronger result.
The set \eqref{eq:T} is also large in a topological sense, see~\S~\ref{oto} below.

Combination of Theorem~\ref{1} and Theorem~\ref{t2} suggests
the following conjecture, in the spirit of the open problem closing
Burger's paper~\cite{burger}
or~\cite[Theorem~6.2]{ds}.

\begin{conjecture}  \label{KK}
Let $n, I$ be as above.
Suppose $f_k: I\mapsto \mathbb{R}$ and $g_k: I\mapsto \mathbb{R}$, $k\ge 1$, are sequences of analytic functions, with the properties
that
\begin{itemize}
    \item[(i)]  for any $k\ge 1$, the functions $g_k$ are
not of the form \eqref{eq:four} for any $a,b,c,d\in\mathbb{R}$
\item[(ii)] 
for $\tau_{a,b,c,d}$ as in \eqref{eq:four} with any $a,b,c,d\in\mathbb{R}$, we have the non-identity of functions
on $I$
\[
f_{k_1}(z)\ne \tau_{a,b,c,d}(g_{k_2})(z), \qquad k_1, k_2\in \mathbb{N}.
\]
\end{itemize}
Then there exist $A\in \mathscr{L}_{n,n}$ such that $\textswab{f}_k(A)\in \mathscr{L}_{n,n}$ and $\textswab{g}_k(A)\notin \mathscr{L}_{n,n}$ for any $k\ge 1$, with definitions as in \eqref{eq:ex}. Equivalently
\[
\bigcap_{k\ge 1} (\textswab{f}_k^{-1}(\mathscr{L}_{n,n})\cap \textswab{g}_k^{-1}(\mathscr{L}_{n,n}^c))\cap \mathscr{L}_{n,n}\ne \emptyset.
\]
\end{conjecture}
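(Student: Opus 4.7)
The plan is to merge the constructions underlying Theorem~\ref{1} and Theorem~\ref{t2} into a single parametric family. A natural candidate is the one-parameter family
\[
A_\xi = P\,\mathrm{diag}(\xi,a_2,\ldots,a_n)\,P^{-1},
\]
where $P\in GL_n(\mathbb{Q})$ is fixed, the auxiliary parameters $a_2,\ldots,a_n$ are fixed real numbers chosen away from the singular loci of the $\textswab{f}_k,\textswab{g}_k$, and $\xi$ ranges over a suitable subset of $I\cap\mathscr{L}_{1,1}$. By Proposition~\ref{hauser} and Proposition~\ref{pro}, every such $A_\xi$ lies in $\mathscr{L}_{n,n}$ whenever $\xi\in\mathscr{L}_{1,1}$ (a column of the diagonal part carries the Liouville direction, and conjugation by $P$ and arithmetic with rationals preserves Liouvilleness up to condition \eqref{eq:tiere}). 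Moreover $\textswab{f}_k$ acts diagonally on $A_\xi$ in the conjugated frame, so the problem reduces, up to rational/conjugate corrections, to the simultaneous behaviour of the scalar sequences $\xi\mapsto f_k(\xi)$ and $\xi\mapsto g_k(\xi)$.

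First I would carry out the positive half. By the Alnia\c{c}ik--Saias type result from~\cite{asa}, used already in Theorem~\ref{t2}, the set
\[
\mathscr{F} = \{\xi\in I\cap\mathscr{L}_{1,1}:f_k(\xi)\in \mathscr{L}_{1,1}\text{ for every }k\ge 1\}
\]
is a dense $G_\delta$ subset of $I\cap\mathscr{L}_{1,1}$. For any $\xi\in\mathscr{F}$ the row/column of $\textswab{f}_k(A_\xi)$ that inherits $f_k(\xi)$ already witnesses $\textswab{f}_k(A_\xi)\in\mathscr{L}_{n,n}$, via the second part of Proposition~\ref{hauser} (after checking condition \eqref{eq:tiere} for generic choice of $a_2,\ldots,a_n$).

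Next I would carry out the negative half. Since by hypothesis~(i) each $g_k$ is \emph{not} of the form \eqref{eq:four}, the analytic curve $\xi\mapsto\textswab{g}_k(A_\xi)$ in $\mathbb{R}^{n^2}$ is nondegenerate in the sense required by Kleinbock--Margulis~\cite{km}, exactly as in the proof of Theorem~\ref{1}. Their extremality theorem then yields $\omega(\textswab{g}_k(A_\xi))\le 2$ for Lebesgue-almost every $\xi\in I$. Intersecting over the countable index set,
\[
\mathscr{G} = \{\xi\in I:\textswab{g}_k(A_\xi)\notin\mathscr{L}_{n,n}\text{ for every }k\ge 1\}
\]
remains of full Lebesgue measure on $I$. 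Since $\mathscr{F}$ is comeagre in $I$ and $\mathscr{G}$ is conull in $I$, the two sets intersect in an uncountable subset, and any $\xi\in\mathscr{F}\cap\mathscr{G}$ produces a matrix $A_\xi$ satisfying the conjecture.

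The hard part is twofold. Verifying nondegeneracy of the curve $\xi\mapsto\textswab{g}_k(A_\xi)$ under hypothesis~(i) alone amounts, after absorbing the conjugation by $P$, to a Wronskian-type criterion on $g_k$ whose failure characterises precisely the Möbius maps of \eqref{eq:four}; one expects (i) to be used here in the sharpest possible way. Equally delicate is the role of hypothesis~(ii): if for some $k_1,k_2$ one had $f_{k_1}=\tau_{a,b,c,d}\circ g_{k_2}$, then by (a suitable extension of) Proposition~\ref{pro} the Liouville behaviour of $\textswab{f}_{k_1}(A_\xi)$ and $\textswab{g}_{k_2}(A_\xi)$ would be locked together along our family, forcing $\mathscr{F}\cap\mathscr{G}=\emptyset$; so (ii) is exactly what keeps the two targeted behaviours decoupled along the curve. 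Making the Kleinbock--Margulis input robust enough to tolerate the simultaneous coordinate constraints coming from the $\textswab{f}_k$-side, in particular when the $g_k$-curves collectively lie in a low-dimensional analytic variety, appears to require input genuinely beyond the proofs of Theorems~\ref{1} and~\ref{t2}, and seems to be the most serious obstacle to a complete proof.
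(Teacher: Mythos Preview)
The statement you are attempting to prove is labelled a \emph{conjecture} in the paper; it is explicitly left open, with the remark that even the case of a single pair $f_1,g_1$ ``is far from obvious''. So there is no proof in the paper to compare against, and the relevant question is whether your argument actually establishes the claim. It does not.

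The decisive error is the sentence ``Since $\mathscr{F}$ is comeagre in $I$ and $\mathscr{G}$ is conull in $I$, the two sets intersect in an uncountable subset''. This inference is false in general: the Liouville numbers themselves form a comeagre set of Lebesgue measure zero, and their complement is a meagre set of full measure, with empty intersection. In your situation $\mathscr{F}\subseteq\mathscr{L}_{1,1}$ is automatically a Lebesgue nullset, so the full-measure information on $\mathscr{G}$ coming from Kleinbock--Margulis says nothing whatsoever about $\mathscr{F}\cap\mathscr{G}$. This category/measure incompatibility is precisely the obstruction that makes the conjecture nontrivial; it is not a technicality to be patched later.

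There is a second structural problem. With your diagonal ansatz $A_\xi=P\,\mathrm{diag}(\xi,a_2,\ldots,a_n)\,P^{-1}$ and rational $P$, one has $\textswab{g}_k(A_\xi)=P\,\mathrm{diag}(g_k(\xi),g_k(a_2),\ldots,g_k(a_n))\,P^{-1}$, so by Proposition~\ref{pro} and the decoupling for diagonal matrices, $\textswab{g}_k(A_\xi)\notin\mathscr{L}_{n,n}$ is equivalent (once the $a_j$ are chosen generically) to the scalar condition $g_k(\xi)\notin\mathscr{L}_{1,1}$. Thus your construction reduces the conjecture for $n\ge 2$ to finding a Liouville number $\xi$ with $f_k(\xi)\in\mathscr{L}_{1,1}$ and $g_k(\xi)\notin\mathscr{L}_{1,1}$ for all $k$. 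This is the one-dimensional analogue, which the paper explicitly ties to Mahler's open problem and to the question at the end of Burger's paper. In particular the curve $\xi\mapsto\textswab{g}_k(A_\xi)$ is an affine line in $\mathbb{R}^{n^2}$, hence degenerate, and the Kleinbock--Margulis machinery gives no leverage here beyond the elementary fact that almost every real number is not Liouville. Your final paragraph acknowledges difficulties, but the actual gap occurs earlier and is not of the kind you describe; the approach via fully diagonalisable matrices simply collapses the problem back to the hard scalar case rather than exploiting the extra matrix degrees of freedom that make Theorems~\ref{1} and~\ref{t2} work.
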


The assumptions are natural in view of our results and cannot be
relaxed up to possibly restricting $a,b,c,d$ to subsets of $\mathbb{R}$.
 We point out that if the conjecture holds
 for some $n=\ell$, then also
 for any $n\ge \ell$. This can be shown by considering 
 $A=\rm{diag}(A_{\ell},B)$ with $A_{\ell}\in \mathscr{L}_{\ell,\ell}$ any such matrix and $B$
 any $(n-\ell)\times (n-\ell)$ matrix so that $\textswab{g}_{\ell}(B)\notin \mathscr{L}_{n-\ell,n-\ell}$ for every $k\ge 1$. Such 
matrices $B$ are easily seen to exist by metrical means, see
the proof of Theorem~\ref{1} below
for more details.
On the other hand,
even for just one pair of functions $f=f_1, g=g_1$ satisfying the hypotheses (i), (ii) with $k=k_1=k_2=1$, the claim of 
Conjecture~\ref{KK} is far from obvious. 

When $n=1$, for a similar claim we would certainly need
to exclude more relations in view of Maillet's result, and a complete description requires a comprehensive understanding of Mahler's problem in the original formulation.

\subsection{ One-to-one continuous maps } \label{oto}

We now study the image of Liouville matrices under functions
$f_k: U\subseteq \mathbb{R}^{m\times n}\to \mathbb{R}^{m\times n}$. Following
the short topological argument in~\cite{sch},
a similar result to Theorem~\ref{t2} can be obtained for injective continuous maps. 

\begin{theorem}  \label{gdel}
    Let $m,n$ be positive integers.
    Let $U\subseteq \mathbb{R}^{m\times n}$ be a non-empty open set
    and $f_k: U\to \mathbb{R}^{m\times n}$ 
a sequence of injective, continuous functions. Then there exists a dense $G_{\delta}$ subset $\Omega\subseteq U\cap \mathscr{L}_{m,n}$ within $U$
such that $f_k(A)\in \mathscr{L}_{m,n}$ for
any $A\in \Omega$ and any $k\ge 1$.
 In other words
\[
\bigcap_{k\ge 1} f_k^{-1}(\mathscr{L}_{m,n})\cap \mathscr{L}_{m,n} 
\]
is a dense $G_{\delta}$ subset of $U$.
\end{theorem}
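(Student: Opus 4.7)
The plan is to show that $\mathscr{L}_{m,n}$ is itself a dense $G_\delta$ subset of $\mathbb{R}^{m\times n}$, and then to transfer this property to each preimage $f_k^{-1}(\mathscr{L}_{m,n})$ by combining Brouwer's invariance of domain with the Baire category theorem, in the spirit of the topological argument of Erd\H{o}s adapted in~\cite{sch}.

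First I would verify that $\mathscr{L}_{m,n}$ is a dense $G_\delta$ subset of $\mathbb{R}^{m\times n}\cong \mathbb{R}^{mn}$. The approximation clause of Definition~\ref{lio} reads
\[
\mathcal{G} := \bigcap_{N \geq 1} \bigcup_{\substack{\textbf{q} \in \mathbb{Z}^n \setminus \{\textbf{0}\} \\ \textbf{p} \in \mathbb{Z}^m}} \left\{ A \in \mathbb{R}^{m\times n} : \Vert A\textbf{q} - \textbf{p}\Vert < \Vert \textbf{q}\Vert^{-N} \right\},
\]
where each inner union is open by continuity of $A \mapsto A\textbf{q}-\textbf{p}$, and is dense because the rational matrices $P/q$ with $P\in\mathbb{Z}^{m\times n}$, $q\in\mathbb{N}$, are dense in $\mathbb{R}^{m\times n}$ and each such $A=P/q$ satisfies $A(q\textbf{e}_j)=P\textbf{e}_j\in\mathbb{Z}^m$, hence lies trivially in the union for every $N$. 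The failure set of \eqref{eq:tiere}, namely
\[
\mathcal{B} := \bigcup_{\substack{\textbf{q}\in\mathbb{Z}^n\setminus\{\textbf{0}\} \\ \textbf{p}\in\mathbb{Z}^m}} \{A : A\textbf{q}=\textbf{p}\},
\]
is a countable union of closed proper affine subspaces of $\mathbb{R}^{m\times n}$ (each of codimension $m\geq 1$), hence an $F_\sigma$ meager set with dense $G_\delta$ complement. Therefore $\mathscr{L}_{m,n}=\mathcal{G}\setminus\mathcal{B}$ is a dense $G_\delta$ subset of $\mathbb{R}^{m\times n}$.

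Next, since each $f_k:U\to \mathbb{R}^{m\times n}$ is a continuous injection between open subsets of the $mn$-dimensional Euclidean space, Brouwer's invariance of domain guarantees that $f_k(U)$ is open and that $f_k:U\to f_k(U)$ is a homeomorphism. The intersection of the dense $G_\delta$ set $\mathscr{L}_{m,n}$ with the nonempty open set $f_k(U)$ is dense $G_\delta$ in $f_k(U)$, and pulling back through the homeomorphism $f_k$ shows that $f_k^{-1}(\mathscr{L}_{m,n})$ is itself a dense $G_\delta$ subset of $U$. A final application of the Baire category theorem in the locally completely metrizable space $U$ then ensures that
\[
(\mathscr{L}_{m,n}\cap U)\cap \bigcap_{k\geq 1} f_k^{-1}(\mathscr{L}_{m,n})
\]
remains a dense $G_\delta$ subset of $U$, which is the desired conclusion.

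The main obstacle is the role of the injectivity hypothesis: without it, even a single continuous $f_k$ could collapse all of $U$ to a single non-Liouville matrix and destroy the conclusion. Brouwer's invariance of domain is precisely the ingredient that converts continuous injectivity into openness and thereby keeps pullbacks of comeager sets comeager; once that is in hand, the rest is routine Baire category bookkeeping.
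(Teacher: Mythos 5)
Your argument is essentially identical to the paper's: establish that $\mathscr{L}_{m,n}$ is dense $G_\delta$, use Brouwer's invariance of domain to see that each $f_k$ is a homeomorphism onto an open image so that $f_k^{-1}(\mathscr{L}_{m,n})$ is dense $G_\delta$ in $U$, and finish with Baire. One small repair is needed in your description of $\mathcal{G}$: as written, the inner union over all $\textbf{q}\neq\textbf{0}$ contains \emph{every} matrix for every $N$ (take $\textbf{q}=\textbf{e}_1$ and $\textbf{p}$ the nearest integer vector to the first column, so $\Vert A\textbf{q}-\textbf{p}\Vert\le 1/2<1=\Vert\textbf{q}\Vert^{-N}$), so $\mathcal{G}=\mathbb{R}^{m\times n}$ and $\mathcal{G}\setminus\mathcal{B}$ is strictly larger than $\mathscr{L}_{m,n}$; as in the paper's sets $Y_h$, one must restrict the union to $\Vert\textbf{q}\Vert\ge 2$, after which density still follows from your rational-matrix argument (using $\textbf{q}=q\textbf{e}_j$ with $q\ge 2$) and the rest of the proof goes through unchanged.
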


In the proof we use the famous result of Brouwer that injective
continuous self-maps on a Euclidean space are open onto their image and thus locally induce homeomorphisms.

\begin{proof}
    As indicated we use an analogous argument as in~\cite{sch}.
First notice that, similar to $n=1$, 
for any integer $h\ge 1$
\[
Y_h:= \bigcup_{\textbf{q}\in \mathbb{Z}^n: \Vert \textbf{q}\Vert\ge 2 } \bigcup_{\textbf{p}\in \mathbb{Z}^m} Z_h(\textbf{p},\textbf{q}), \qquad Z_h(\textbf{p},\textbf{q}):=  
\{  A\in \mathbb{R}^{m\times n}: 0<\Vert A\textbf{q}-\textbf{p}\Vert < \Vert \textbf{q}\Vert^{-h}\} 
\]
is an open dense set in $\mathbb{R}^{mn}$. Indeed, it is open by continuity of the maps
$\varphi_{\textbf{p},\textbf{q}}: A\mapsto A\textbf{p}-\textbf{q}$,
and it is dense since for any $A\in \mathbb{Q}^{m\times n}$ there are obviously $\textbf{p}\in\mathbb{Z}^m, \textbf{q}\in\mathbb{Z}^n\setminus\{ \textbf{0}\}$
so that $A\textbf{q}-\textbf{p}=\textbf{0}$ and again by continuity of $\varphi_{\textbf{p},\textbf{q}}\not\equiv \textbf{0}$.
So the intersection
of $Y_h$ over $h\ge 1$ is a 
dense $G_{\delta}$ set. Moreover
if $(L_h)_{h\ge 1}$ denotes the countable collection of rational hyperplanes
in $\mathbb{R}^{mn}$ then $\cap_{h\ge 1} L_h^c$
is obviously a dense $G_{\delta}$ set as well. Hence
\[
\mathscr{L}_{m,n} \supseteq
\bigcap_{h\ge 1} L_h^c \cap Y_h 
\]
is again dense $G_{\delta}$ (there is equality in the inclusion if we restrict to a subset of $L_h$ inducing $\mathbb{Z}$-dependent columns), so $\mathscr{L}_{m,n} \cap U$ is dense $G_{\delta}$ within $U$.

The remaining, short argument based
on Baire's Theorem is precisely as for $m=n=1$ in~\cite{sch}:
Since $f_k$ are injective by Brouwer's result we have that
the images $U_k:= f_k(U)$ are open sets,
hence we find a dense $G_{\delta}$-subset
of $\mathscr{L}_{m,n}$ in each $U_k$,
say $\emptyset\ne \Lambda_k:= U_k\cap \mathscr{L}_{m,n}$.
But since $f_k$ induce homeomorphisms 
this means their preimages $Z_k=f_k^{-1}(\Lambda_k)$ are again dense $G_{\delta}$-sets
(in $U$).
Hence the countable intersection
$\Omega:=\cap_{k\ge 1} Z_k\cap \mathscr{L}_{m,n}$ is a dense 
$G_{\delta}$ subset of $U$ as well.
Any matrix in this set $\Omega$ has the claimed property.
\end{proof}

The same topological
result can be obtained 
in the setting of
Theorem~\ref{t2} as well,
meaning \eqref{eq:T} is again a dense $G_{\delta}$ set for $\textswab{f}_k$ derived from non-constant scalar analytic $f_k$ via \eqref{eq:ex}.
However, as it cannot be directly deduced from Theorem~\ref{gdel}
and a complete proof we found is slightly technical, we prefer to omit it here. (For instance, a technical problem is that derivatives $f_k^{\prime}$ may vanish within $I$, then $f_k$ and $\textswab{f}_k$ are not locally injective.)
On the other hand, it is not clear to us if a positive Hausdorff dimension result can be obtained in context of
Theorem~\ref{gdel}.



A special case is the following generalization of a result by 
Erd\H{o}s~\cite{erd} for $m=n=1$.

\begin{corollary} \label{KKo}
    Any $A\in \mathbb{R}^{m\times n}$ can be written $A=B+C$ with $B,C\in \mathscr{L}_{m,n}$.
\end{corollary}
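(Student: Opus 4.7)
The plan is to deduce this as an almost immediate consequence of Theorem~\ref{gdel}. Fix $A\in \mathbb{R}^{m\times n}$ and consider the single self-map
\[
f\colon \mathbb{R}^{m\times n}\to \mathbb{R}^{m\times n}, \qquad f(B)=A-B.
\]
This $f$ is continuous (in fact an affine isometry in supremum norm) and bijective, with $f^{-1}=f$. In particular, it is injective, so the hypotheses of Theorem~\ref{gdel} are met with $U=\mathbb{R}^{m\times n}$ and the single function $f_1=f$.

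First I would apply Theorem~\ref{gdel} to produce a dense $G_{\delta}$ subset
\[
\Omega \;\subseteq\; f^{-1}(\mathscr{L}_{m,n})\cap \mathscr{L}_{m,n}
\]
of $\mathbb{R}^{m\times n}$. By Baire's theorem $\Omega$ is non-empty (in fact comeagre), so I may choose any $B\in \Omega$. By construction $B\in \mathscr{L}_{m,n}$, and $C:=f(B)=A-B\in \mathscr{L}_{m,n}$ as well. Then $A=B+C$ is the desired decomposition.

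There is essentially no obstacle: the only thing one needs beyond Theorem~\ref{gdel} is the trivial observation that a translation on $\mathbb{R}^{m\times n}$ is an injective continuous self-map, which makes the theorem directly applicable. One could equivalently argue \emph{ab initio} by noting that both $\mathscr{L}_{m,n}$ and its translate $A-\mathscr{L}_{m,n}$ are dense $G_{\delta}$ subsets of $\mathbb{R}^{m\times n}$ (using that translation by a rational matrix preserves $\mathscr{L}_{m,n}$ for density, together with the openness argument in the proof of Theorem~\ref{gdel}), and then invoke Baire to intersect them; this is the matrix analogue of Erd\H{o}s's original argument for $m=n=1$. In fact the same reasoning yields the stronger statement that the set of such $B$ is a dense $G_{\delta}$, so ``most'' decompositions $A=B+(A-B)$ have both summands Liouville.
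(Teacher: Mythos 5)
Your proposal is correct and is exactly the paper's argument: the paper also proves this corollary by applying Theorem~\ref{gdel} to the single map $f(X)=A-X$. The extra observations about bijectivity and the comeagre set of valid $B$ are fine but not needed beyond injectivity and continuity.
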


\begin{proof}
    Apply Theorem~\ref{gdel}
    with $f(X)=A-X$.
\end{proof}

We should notice that the ideas in all
papers~\cite{asa, sch, wald} as well as our proof of Theorem~\ref{gdel} above originate in this work of
Erd\H{o}s. See also~\cite{rie}.
Via Theorem~\ref{gdel},
we can further directly extend several results from~\cite{wald} to the matrix setting, we only state the analogue of~\cite[Corollary~7]{wald}.

\begin{corollary}
    Let $U\subseteq \mathbb{R}^{m\times n}$
    be a non-empty open connected set,
    and $\varphi: U\to U$ be an injective, continuous self map. Then there exists a dense $G_{\delta}$ set of $A\in \mathscr{L}_{m,n}\cap U$ so that the
    orbit $\varphi^k(A)=\varphi\circ \varphi\cdots\circ\varphi(A)$, $k\ge 1$, consists only
    of elements in $\mathscr{L}_{m,n}\cap U$.
\end{corollary}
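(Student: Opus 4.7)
The plan is to apply Theorem~\ref{gdel} directly to the sequence of iterates $(\varphi^k)_{k\ge 1}$. First I would verify that each $\varphi^k=\varphi\circ\cdots\circ\varphi$ satisfies the hypotheses of Theorem~\ref{gdel} as a map $U\to \mathbb{R}^{m\times n}$. Since $\varphi$ is a self-map of $U$, the composition $\varphi^k$ is well-defined on $U$ for every $k\ge 1$, with image contained in $U$. Continuity of $\varphi^k$ is immediate, and its injectivity follows by a straightforward induction on $k$ from the injectivity of $\varphi$ (if $\varphi^k(A)=\varphi^k(B)$ then applying $\varphi^{-1}$ restricted to the image recursively gives $A=B$). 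Thus $(f_k)_{k\ge 1}:=(\varphi^k)_{k\ge 1}$ is a sequence of injective continuous maps $U\to \mathbb{R}^{m\times n}$.

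Next I would invoke Theorem~\ref{gdel} applied to this sequence. It produces a dense $G_\delta$ subset $\Omega\subseteq U\cap \mathscr{L}_{m,n}$ such that $\varphi^k(A)\in \mathscr{L}_{m,n}$ for every $A\in\Omega$ and every $k\ge 1$. Because $\varphi$ maps $U$ into itself, we automatically have $\varphi^k(A)\in U$, and therefore $\varphi^k(A)\in \mathscr{L}_{m,n}\cap U$ for every $k\ge 1$, which is the desired conclusion.

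There is essentially no obstacle beyond the bookkeeping above, since Theorem~\ref{gdel} is already formulated to handle countably many functions simultaneously. The connectedness assumption on $U$ plays no role in this deduction; it is only the self-map condition $\varphi(U)\subseteq U$ that is used, to ensure both that all iterates $\varphi^k$ are defined on $U$ and that their values remain in $U$ so that the orbit lies in $\mathscr{L}_{m,n}\cap U$ rather than merely in $\mathscr{L}_{m,n}$.
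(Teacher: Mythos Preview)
Your proposal is correct and follows exactly the same approach as the paper: apply Theorem~\ref{gdel} to the sequence $f_k=\varphi^k$, noting that these iterates are well-defined on $U$ and inherit continuity and injectivity from $\varphi$. Your write-up is simply a more detailed version of the paper's one-line proof, and your observation that connectedness plays no role in the deduction is accurate.
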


In the original formulation in~\cite{wald} it is assumed that 
$\varphi$ is bijective, however surjectivity is not needed.

\begin{proof}
    Apply Theorem~\ref{gdel} to
    $f_k=\varphi^k$, which are defined and easily seen to inherit the properties of being continuous and injective
    from $\varphi$.
\end{proof}

\subsection{On relaxing conditions
of Theorem~\ref{gdel} }
For $m=n=1$, the assumption of injectivity in Theorem~\ref{gdel} can be relaxed. As stated before, indeed it was shown in~\cite{asa} that we only need
$f_k$ to be nowhere constant on an interval $I$,
meaning not constant on any non-empty open subinterval of $I$,
for the implication of Theorem~\ref{gdel}. The latter indeed 
defines a strictly larger set of functions. 
In higher dimensions, a priori 
the most 
natural way to extend the concept of nowhere constant seems the following.

\begin{definition} \label{defi2}
    Let $m_1,m_2,n_1,n_2$ be positive integers and
    $U\subseteq \mathbb{R}^{m_1\times n_1}\cong \mathbb{R}^{m_1n_1}$ be an open, non-empty set. We call a matrix function
    $f: U\to \mathbb{R}^{m_2\times n_2}$ {\em nowhere constant} if it is not constant
   on any non-empty open subset of $U$.
\end{definition}

 When $n\ge 2$, 
it is easy to see that the analogue of~\cite{asa} fails. 
Consider the
functions that maps $A=(a_{i,j})_{1\le i\le m,1\le j\le n}\in U$ to the $m\times n$ matrix $f(A)=B=(b_{i,j})_{1\le i\le m,1\le j\le n}$ with each entry $b_{i,j}=a_{1,1}$. Indeed, this function is continuous, nowhere constant but any point in the image satisfies a fixed linear dependence of columns relation over $\mathbb{Z}$, hence 
$f(U)$ has empty intersection in a trivial way 
with $\mathscr{L}_{m,n}$ as \eqref{eq:tiere} fails for any $B\in f(U)$. So it seems reasonable to also consider the following additional property. 

\begin{definition}
    A function $f: U\subseteq \mathbb{R}^{m\times n}\to \mathbb{R}^{m\times n}$ satisfies the $LIC$ property (linearly independent columns) if for every set $\Omega\subseteq U$ with non-empty interior,
    there
    are no fixed $\textbf{p}\in\mathbb{Z}^m, \textbf{q}\in\mathbb{Z}^n\setminus \{ \textbf{0}\}$ so that $B\textbf{q}-\textbf{p}=\textbf{0}$ for any $B\in f(\Omega)$.
\end{definition}

The LIC property is easily seen to be independent from the condition of being nowhere constant. 
However, even assuming both nowhere constant and LIC property is not enough to guarantee the conclusion of Theorem~\ref{gdel} in general.

\begin{theorem}  \label{ier}
Let $m,n$ be positive integers and $U\subseteq \mathbb{R}^{m\times n}$ be an open, non-empty set.
\begin{itemize}
     \item[(i)] If $m=n=1$, then for every
     nowhere constant continuous function $f: U\to \mathbb{R}$ the set
     $f^{-1}(\mathscr{L}_{1,1})\cap \mathscr{L}_{1,1}$ is dense $G_{\delta}$ in $U$, in particular non-empty.
     \item[(ii)]
    If $(m,n)\ne (1,1)$, then there exists
    a nowhere constant continuous function 
    $f: U\to \mathbb{R}^{m\times n}$ so that $f(A)\notin \mathscr{L}_{m,n}$ for any matrix $A\in U$, so $f^{-1}(\mathscr{L}_{m,n})=\emptyset$.
     
    \item[(iii)] If $m\ge 2$, there
    exists a nowhere constant continuous function with LIC property 
    $f: V\to \mathbb{R}^{m\times n}$ so that $f(A)\notin \mathscr{L}_{m,n}$ for any matrix $A\in U$, so $f^{-1}(\mathscr{L}_{m,n})=\emptyset$.
    \end{itemize}
\end{theorem}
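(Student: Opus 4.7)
Part (i) is the result of Alnia\c{c}ik and Saias~\cite{asa} recalled in the introduction, so no new argument is needed. The content lies in (ii) and (iii), and in both cases I would exhibit an explicit $f$ whose image carries a rigid obstruction to being Liouville while depending nontrivially enough on $A$ to be nowhere constant (and, in (iii), to satisfy LIC).

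For part (ii) I would split by cases. If $n\ge 2$ (with any $m\ge 1$), take $f(A)$ to be the $m\times n$ matrix all of whose entries equal the scalar $a_{1,1}$; it is continuous, nowhere constant, and its columns coincide, so choosing $\textbf{q}=(1,-1,0,\ldots,0)^t\in\mathbb{Z}^n\setminus\{\textbf{0}\}$ and $\textbf{p}=\textbf{0}$ yields $f(A)\textbf{q}-\textbf{p}=\textbf{0}$, so \eqref{eq:tiere} fails and $f(A)\notin\mathscr{L}_{m,n}$. If $n=1$ and $m\ge 2$, take $f(A)=(a_{1,1},\sqrt{2},0,\ldots,0)^t$; then any Liouville-type approximation of $f(A)$ would force $|\sqrt{2}q-p_2|<|q|^{-N}$ for arbitrarily large $N$, contradicting the classical lower bound $|\sqrt{2}q-p_2|\gg |q|^{-1}$. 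Together these cover $(m,n)\ne(1,1)$, and both constructions are manifestly nowhere constant and continuous.

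For part (iii) with $m\ge 2$, I would give a uniform construction for all $n\ge 1$. Fix distinct primes $p_1<\cdots<p_n$, let $\alpha_j:=\sqrt{p_j}$, and recall that $\{1,\alpha_1,\ldots,\alpha_n\}$ is $\mathbb{Q}$-linearly independent. Define $f(A)$ to be the $m\times n$ matrix whose last row is $\rho:=(\alpha_1,\ldots,\alpha_n)$, whose $(1,1)$-entry equals $a_{1,1}$, and whose remaining entries vanish; clearly $f$ is continuous and nowhere constant. For LIC, if $\textbf{q}\in\mathbb{Z}^n\setminus\{\textbf{0}\}$ and $\textbf{p}\in\mathbb{Z}^m$ satisfied $f(A)\textbf{q}=\textbf{p}$ on an open subset of $U$, reading the last row gives the constant relation $\sum_j q_j\alpha_j=p_m$, which forces $\textbf{q}=\textbf{0}$ and $p_m=0$ by $\mathbb{Q}$-independence, a contradiction. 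To rule out $f(A)\in\mathscr{L}_{m,n}$ I would apply the contrapositive of Proposition~\ref{hauser} to the last row: $\rho$ is not $\mathbb{Z}$-dependent together with $\{1\}$, and Schmidt's subspace theorem applied to the algebraic numbers $\alpha_1,\ldots,\alpha_n$ gives some $\varepsilon>0$ with $|p-\rho\cdot\textbf{q}|\gg\|\textbf{q}\|^{-n-\varepsilon}$ for $\|\textbf{q}\|$ large, so $\rho\notin\mathscr{L}_{1,n}$ either; Proposition~\ref{hauser} then yields $f(A)\notin\mathscr{L}_{m,n}$.

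The only genuinely non-elementary step is the non-Liouvillity of $\rho$ in (iii); Schmidt's subspace theorem gives the cleanest route, though for small $n$ an elementary Galois-conjugate estimate suffices, and for general $n$ one could alternatively pick $\rho$ Lebesgue-generically outside $\mathscr{L}_{1,n}$ at the cost of losing explicitness. All remaining verifications amount to bookkeeping with Proposition~\ref{hauser} and the definitions of $\mathscr{L}_{m,n}$ and LIC.
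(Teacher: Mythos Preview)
Your argument is correct and follows the same underlying idea as the paper: for (ii) with $n\ge 2$ you use the same ``all entries equal $a_{1,1}$'' map; for (iii) you force one row of $f(A)$ to be a fixed vector that is neither in $\mathscr{L}_{1,n}$ nor satisfies any rational linear relation with $1$, then invoke Proposition~\ref{hauser}. The only substantive difference is in how that fixed row is produced. The paper keeps the \emph{entire} first row equal to the first row of $A$ and chooses the remaining rows $\textbf{b}_2,\ldots,\textbf{b}_m$ generically from the full-measure set $\mathbb{R}^n\setminus(\mathscr{L}_{1,n}\cup\bigcup_j \Pi(L_j))$; this is completely elementary but non-explicit. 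You instead take the explicit vector $\rho=(\sqrt{p_1},\ldots,\sqrt{p_n})$ and appeal to Schmidt's subspace theorem to certify $\rho\notin\mathscr{L}_{1,n}$, which is a heavier tool but yields a concrete example (and you correctly note that a generic choice would work too). Your separate treatment of the case $n=1,\,m\ge 2$ in (ii) via $\sqrt{2}$ is also fine; the paper simply absorbs that case into (iii).
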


\begin{proof}
Claim (i) is just~\cite{asa}, if $n\ge 2$ claim (ii) has already been observed above.  
Note that (iii) also contains the remaining case $n=1, m\ge 2$ of (ii). So 
to prove (iii), let
$m\ge 2$ and consider a function $f$ that acts as the identity on the first line $\textbf{a}_1=(a_{1,1},\ldots,a_{1,n})$ of a matrix $A=(a_{i,j})\in U$ 
    and constant on the remaining lines. Hereby we choose the constant image vectors $\textbf{b}_2,\ldots,\textbf{b}_m\in \mathbb{R}^n\setminus (\mathscr{L}_{1,n}\cup \Pi(L_j))$ where $L_j$ are the countable collection of all rational hyperplanes of 
    $\mathbb{R}^{n+1}$ and $\Pi: \mathbb{R}^{n+1}\to \mathbb{R}^n$ is the restriction by chopping off the last coordinate.
    Such $\textbf{b}_i$ clearly exist, in fact this set has full $n$-dimensional Lebesgue measure as
    $\mathscr{L}_{m,n}$ and $\Pi(L_j)$ are all nullsets.
  Then $f$ is clearly continuous and nowhere constant, and satisfies the LIC property since $\textbf{b}_2\notin \cup \Pi(L_j)$ implies
  \eqref{eq:tiere} for any matrix $B\in f(U)$.
    However, since the second line $\textbf{b}_2\notin \mathscr{L}_{1,n}$, we conclude
    $f(A)\notin \mathscr{L}_{m,n}$  by Proposition~\ref{hauser}, indeed $\infty>\omega^{2\times n}(\textbf{b}_2)\ge \omega^{m\times n}( f(A))$.  
\end{proof}

Observe that claim (ii), (iii)
apply to all matrices $A$, not only
Liouville matrices. The main reason for the failure in (ii), (iii) for $m\ge 2$ is that in higher dimension, the property nowhere constant for a function is insufficient to guarantee that its image contains an open set, which was used in the argument for $m=n=1$ in~\cite{asa}.

There is a gap in Theorem~\ref{ier}
for $m=1, n\ge 2$ and nowhere constant, continuous functions with LIC property. The converse result of the next theorem illustrates
why this case is more complicated.
Define $\mathscr{L}^{\ast}_{m,n}\supseteq \mathscr{L}_{m,n}$ 
the set of irrational real matrices $A\notin \mathbb{Q}^{m\times n}$ such that $\Vert A\textbf{q}-\textbf{p}\Vert< \Vert \textbf{q}\Vert^{-N}$ has a solution in integer
vectors $\textbf{p}\in\mathbb{Z}^m,\textbf{q}\in\mathbb{Z}^n\setminus \{ \textbf{0}\}$ for any $N$. As indicated in~\S~\ref{intro}, for $n=1$ we have $\mathscr{L}^{\ast}_{m,1}=\mathscr{L}_{m,1}$, but
for $n\ge 2$ the inclusion is proper.

\begin{theorem}  \label{neu}
     Let $m=1$ and $U\subseteq \mathbb{R}^{1\times n}$ be an open, non-empty set.
      For any sequence
    of nowhere constant continuous functions $f_k: U\to \mathbb{R}^{1\times n}$ there exists a dense $G_{\delta}$ set $\Lambda\subseteq \mathscr{L}_{1,n}\cap U$ 
    so that $f_k(A)\in\mathscr{L}_{1,n}^{\ast}$ 
    for any $A\in \Lambda$ and every $k\ge 1$. In other words
    \[
    \bigcap_{k\ge 1} f_k^{-1}(\mathscr{L}_{1,n}^{\ast})\cap \mathscr{L}_{1,n}
    \]
    is dense a $G_{\delta}$ 
    subset of $U$.
\end{theorem}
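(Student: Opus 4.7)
The plan is to adapt the Baire category argument used in Theorem~\ref{gdel} and in~\cite{asa}, exploiting the weaker conclusion $f_{k}(A)\in\mathscr{L}_{1,n}^{\ast}$ rather than $\mathscr{L}_{1,n}$. The central observation driving the argument is that for row vectors, a \emph{single} Liouville entry in $A\in\mathbb{R}^{1\times n}$ already forces $A\in\mathscr{L}_{1,n}^{\ast}$: taking $\textbf{q}=q\textbf{e}_{j}$ along the $j$-th coordinate axis, the inequality $|A\textbf{q}-p|<\|\textbf{q}\|^{-N}$ reduces to the classical one-dimensional Liouville condition on that single entry. This is why passing to $\mathscr{L}^{\ast}$ is natural here, whereas the stronger $\mathscr{L}_{1,n}$ would require simultaneous $\mathbb{Z}$-independence of all entries together with $1$ in the image, which nowhere constancy alone cannot enforce.

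Concretely, for each $k,N\ge 1$ I would introduce the open set
\[
U_{k,N}=\{A\in U:\exists\,\textbf{q}\in\mathbb{Z}^{n}\setminus\{\textbf{0}\},\,p\in\mathbb{Z},\,\|\textbf{q}\|\ge 2,\ 0<|f_{k}(A)\textbf{q}-p|<\|\textbf{q}\|^{-N}\}
\]
and the $G_{\delta}$ set $U_{k}^{\ast}=\{A\in U:f_{k}(A)\notin\mathbb{Q}^{1\times n}\}$. Openness of $U_{k,N}$ is clear from continuity of $f_{k}$; that $U_{k}^{\ast}$ is $G_{\delta}$ follows since $\mathbb{Q}^{1\times n}$ is countable. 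Assuming density of all these sets, Baire's theorem applied on the open (hence Baire) set $U$, together with the dense $G_{\delta}$ set $\mathscr{L}_{1,n}\cap U$ already provided by the proof of Theorem~\ref{gdel}, delivers the desired dense $G_{\delta}$ set
\[
\Lambda:=(\mathscr{L}_{1,n}\cap U)\cap\bigcap_{k\ge 1}U_{k}^{\ast}\cap\bigcap_{k,N\ge 1}U_{k,N}.
\]

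The main step to verify, and where the nowhere constant hypothesis enters, is density of $U_{k,N}$ and $U_{k}^{\ast}$ in $U$. Given any non-empty open ball $V\subseteq U$, the assumption that $f_{k}$ is not constant on $V$ guarantees that at least one coordinate $f_{k,j}$ of $f_{k}=(f_{k,1},\ldots,f_{k,n})$ is non-constant on $V$. By connectedness of $V$ and continuity, $J:=f_{k,j}(V)$ is a non-degenerate interval in $\mathbb{R}$. Since $J\setminus\mathbb{Q}$ is uncountable, there exists $A\in V$ with $f_{k,j}(A)$ irrational, proving density of $U_{k}^{\ast}$; and since Liouville numbers are dense in $\mathbb{R}$, we can pick $\alpha\in J\cap\mathscr{L}$ and $A^{\ast}\in V$ with $f_{k,j}(A^{\ast})=\alpha$, after which Liouville-ness of $\alpha$ yields integers $q\ge 2$ and $p$ with $0<|q\alpha-p|<q^{-N}$, so that the choice $\textbf{q}=q\textbf{e}_{j}$ places $A^{\ast}$ in $U_{k,N}$. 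The only real conceptual obstacle is the recognition that passing to $\mathscr{L}^{\ast}$ is essentially forced by this axis trick: testing $\textbf{q}$ along a single coordinate records only that coordinate of $f_{k}(A)$ and gives no information about possible rational relations among the others, so strengthening the conclusion to $\mathscr{L}_{1,n}$ would require much stronger control on the joint behaviour of the coordinate functions of $f_{k}$.
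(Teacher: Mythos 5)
Your proposal is correct and follows essentially the same route as the paper's proof: both rest on the observation that one Liouville coordinate of a row vector already forces membership in $\mathscr{L}_{1,n}^{\ast}$ (your axis trick $\textbf{q}=q\textbf{e}_j$ is exactly the paper's cylinder sets $Z_j$), combined with the fact that nowhere constancy makes some coordinate image an interval on every open ball, density of Liouville numbers, and Baire's theorem. The only difference is presentational: you exhibit the $G_{\delta}$ structure via the explicit open sets $U_{k,N}$, while the paper takes the continuous preimage of the $G_{\delta}$ set $\mathscr{L}_{1,n}^{\ast}$.
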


We believe the claim remains true for $\mathscr{L}_{1,n}$ in place of $\mathscr{L}_{1,n}^{\ast}$ throughout,
a proof would be desirable.
On the other hand, for $m\ge 2$ the analogue of Theorem~\ref{ier}, (iii), holds as well for $\mathscr{L}^{\ast}_{m,n}$ by the same proof, which is formally a stronger claim.

\begin{proof}
      We can assume $n\ge 2$, the case $m=n=1$
    reduces to~\cite{asa}. 
    First note that by Propositon~\ref{hauser}, case $m=1$,
    a line vector belongs to 
    $\mathscr{L}_{1,n}^{\ast}$
    (but not necessarily $\mathscr{L}_{1,n}$)
    as
    soon as some entry is a Liouville number. 
    Hence 
    for any non-empty open interval $I$, if $L_I=\mathscr{L}_{1,1}\cap I\subseteq I$ denotes
    the dense $G_{\delta}$ set of Liouville numbers in $I$,
    the cylinder sets
    \[
    Z_j=\mathbb{R}^{j-1}\times L_I\times \mathbb{R}^{n-j}, \qquad 1\le j\le n,
    \]
    consist only of elements of $\mathscr{L}_{1,n}^{\ast}$, so
    \[
    Z_j\subseteq \mathscr{L}_{1,n}^{\ast}.
    \]
    Take any non-empty open box $\mathcal{B}$ 
    in $U\subseteq \mathbb{R}^n$. First consider just one function $f=f_1$.
    Then since $f=(f^1,\ldots,f^n)$ is nowhere constant
    some coordinate function $f^j$ is not constant on $\mathcal{B}$. So
    since connectedness is preserved under continuous maps,
    its image $f^j(\mathcal{B})\subseteq \mathbb{R}$ contains a non-empty open interval $I$.
    But this means $f(\mathcal{B})$ has non-empty intersection with the cylinder set $Z_j$ above, or 
    equivalently $f^{-1}(Z_j)\cap \mathcal{B}\subseteq f^{-1}(\mathscr{L}_{1,n}^{\ast})\cap \mathcal{B}$ is non-empty.
    Thus, as $\mathcal{B}$ was arbitrary in $U$, the set $f^{-1}(\mathscr{L}_{1,n}^{\ast})$ is dense in $U$. Moreover, as $f$ is continuous
    and $\mathscr{L}_{1,n}^{\ast}\supseteq \mathscr{L}_{1,n}$
    is dense $G_{\delta}$ (see the proof of Theorem~\ref{gdel}), $f^{-1}(\mathscr{L}_{1,n}^{\ast})$ is a $G_{\delta}$ set. Hence
    it is a dense $G_{\delta}$ set.
     Now we apply this to $f=f_k$ for all $k\ge 1$ simultaneously, 
     and again since $\mathscr{L}_{1,n}$
     is dense $G_{\delta}$ as well
     we see that the set
     \[
     \Lambda:= \bigcap_{k\ge 1} f_k^{-1}(\mathscr{L}_{1,n}^{\ast}) \cap \mathscr{L}_{1,n}
     \]
    is a dense $G_{\delta}$ set in $U\subseteq \mathbb{R}^n$ as well, with the property of the theorem that $f_k(A)\in \mathscr{L}_{1,n}^{\ast}$ for any $A\in \Lambda\subseteq \mathscr{L}_{1,n}$ and every $k\ge 1$.
\end{proof}

Even if assuming LIC property, the counterexamples in Theorem~\ref{ier} (ii), (iii) are still
slightly artificial as the image has certain constant entry functions.
To avoid this, we propose an alternative to our definition of a nowhere constant function.
Since, very similarly to Proposition~\ref{pro}, the Liouville property of $A\in \mathbb{R}^{m\times n}$ is preserved under actions 
\begin{equation}  \label{eq:Ar}
    A\mapsto R_1AR_2+T,\qquad R_1\in \mathbb{Q}^{m\times m}, R_2\in \mathbb{Q}^{n\times n}, 
    T\in \mathbb{Q}^{m\times n}
\end{equation}
with invertible matrices $R_j$, the following strengthening seems natural.

\begin{definition}
We refer to $A,B\in \mathbb{R}^{m\times n}$ as $\mathscr{L}$-equivalent and write $A\sim B$ if $B$ arises from $A$
via \eqref{eq:Ar}. We write $[A]_{\sim}$ for the class of $A$. We say functions $f,g: U\subseteq \mathbb{R}^{m\times n}\to \mathbb{R}^{m\times n}$ are $\mathscr{L}$-equivalent
and write $f\sim g$ if $g(A)=R_1 f(A)R_2+T$ for fixed $R_j, T$ as in \eqref{eq:Ar} and any $A\in U$, and again write $[f]_{\sim}$ for the class of $f$. We finally call
$f$ {\em strongly 
    nowhere constant} if for 
    every $g\in [f]_{\sim}$ every scalar entry function $g^{i,j}: U\to \mathbb{R}$, $1\le i\le m, 1\le j\le n$, is nowhere constant in the sense of Definition~\ref{defi2}.
\end{definition}

Plainly, we can restrict to  $g\in [f]_{\sim}$ derived via $T=\textbf{0}$ for testing the property
strongly nowhere constant.
Thus the property means that the $\mathbb{Z}$-span of the entries $f^{i,j}(A)$ of $f(A)$ together with the $\textbf{1}$ function can only generate the $\textbf{0}$ function in the trivial way.
For strongly nowhere constant functions, we prove the following
result on column vectors.

\begin{theorem}  \label{analog}
    Let $m\ge 2$, $n=1$. Then there
    exists a strongly nowhere constant
    continuous function $f: \mathbb{R}^{m\times 1}\to \mathbb{R}^{m\times 1}$ with LIC property and so that $f(A)\notin \mathscr{L}_{m,1}$ for any $A\in \mathbb{R}^{m\times 1}$, in other words $f^{-1}(\mathscr{L}_{m,1})=\emptyset$.
\end{theorem}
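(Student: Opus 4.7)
The plan is to construct $f$ explicitly so that its image lies on a single irrational line through the origin of $\mathbb{R}^m$, chosen with good Diophantine properties. Pick $\alpha_2, \ldots, \alpha_m \in \mathbb{R}$ such that $1, \alpha_2, \ldots, \alpha_m$ are $\mathbb{Q}$-linearly independent and such that $\boldsymbol{\beta} := (\alpha_2, \ldots, \alpha_m)^t$ satisfies $\omega^{(m-1) \times 1}(\boldsymbol{\beta}) < \infty$, i.e.\ $\boldsymbol{\beta} \notin \mathscr{L}_{m-1,1}$. Both conditions exclude Lebesgue null sets, so such a choice exists; in fact almost any choice works by Khintchine--Groshev. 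Put $\boldsymbol{\alpha} := (1, \alpha_2, \ldots, \alpha_m)^t$ and define $f : \mathbb{R}^{m \times 1} \to \mathbb{R}^{m \times 1}$ by $f(A) = a_1 \boldsymbol{\alpha}$, where $a_1$ denotes the first entry of $A$. Clearly $f$ is continuous.

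To verify the structural conditions, take any $g \in [f]_\sim$, say $g(A) = R_1 f(A) + T$ with invertible $R_1 \in \mathbb{Q}^{m \times m}$ and $T \in \mathbb{Q}^{m \times 1}$. Setting $\alpha_1 := 1$, the $i$-th entry reads
\[
g^{i,1}(A) = \Big( \sum_{j=1}^m (R_1)_{i,j}\, \alpha_j \Big)\, a_1 + T_{i,1}.
\]
Since the $i$-th row of $R_1$ is a nonzero rational vector, the $\mathbb{Q}$-linear independence of $1, \alpha_2, \ldots, \alpha_m$ forces the coefficient of $a_1$ to be nonzero, so $g^{i,1}$ is nowhere constant on open subsets of $\mathbb{R}^m$. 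This establishes strong nowhere-constancy. The LIC property follows similarly: if $q f(A) = \mathbf{p}$ held on a nonempty open set with $q \neq 0$, then $q a_1 = p_1$ would be constant there, which is impossible as $a_1$ varies.

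The core task is to show $f(A) \notin \mathscr{L}_{m,1}$ for every $A \in \mathbb{R}^{m \times 1}$. If $a_1 = 0$ then $f(A) = \mathbf{0}$ violates \eqref{eq:tiere}. Otherwise assume $a_1 \neq 0$ and suppose for contradiction $f(A) \in \mathscr{L}_{m,1}$, so that $\omega^{m \times 1}(f(A)) = \infty$. For every $N$ there then exist infinitely many pairs $(q, \mathbf{p}) \in (\mathbb{Z} \setminus \{0\}) \times \mathbb{Z}^m$ satisfying $|q a_1 \alpha_i - p_i| < q^{-N}$ for each $i$. For $i \geq 2$ the triangle inequality gives
\[
|p_1 \alpha_i - p_i| \leq |\alpha_i|\,|p_1 - q a_1| + |q a_1 \alpha_i - p_i| < (|\alpha_i| + 1)\, q^{-N},
\]
while the $i = 1$ inequality yields $|p_1| \leq (|a_1| + 1)\, q$. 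Since $a_1 \neq 0$, the case $p_1 = 0$ would force $|a_1| < q^{-N-1}$ and hence bounded $q$, contradicting the infinitude of solutions; so for large $q$ we have $p_1 \neq 0$ and $|p_1| \to \infty$. Combining the bounds produces constants $C_1, C_2 > 0$ depending only on $\boldsymbol{\alpha}$ and $a_1$ with
\[
\max_{2 \leq i \leq m} |p_1 \alpha_i - p_i| < C_1 \, C_2^N\, |p_1|^{-N}.
\]
For $|p_1|$ large enough (which occurs for all but finitely many of the chosen solutions) the right-hand side is dominated by $|p_1|^{-N/2}$, producing infinitely many approximations of $\boldsymbol{\beta}$ at exponent $N/2$ and thus $\omega^{(m-1) \times 1}(\boldsymbol{\beta}) \geq N/2$. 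Letting $N \to \infty$ contradicts the finite irrationality exponent of $\boldsymbol{\beta}$.

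The main technical subtlety is securing the non-Liouville bound uniformly across all $A$, in particular handling rational $a_1 \neq 0$ where the $i = 1$ inequality can be satisfied exactly (freeing $q$ to be any multiple of the denominator). Both the rational and irrational subcases for $a_1$ are absorbed into the same estimate via the single ingredient $|p_1| \to \infty$ combined with the finite joint irrationality exponent of $\boldsymbol{\beta}$; the construction carries all required properties thanks to the simultaneous $\mathbb{Q}$-linear independence of $1, \alpha_2, \ldots, \alpha_m$ and the Diophantine non-degeneracy of $\boldsymbol{\beta}$.
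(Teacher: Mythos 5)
Your proof is correct, but it takes a genuinely different route from the paper. The paper confines the image of $f$ to (a cylinder over) the affine curve $X_m^3+N=X_{m-1}^2$, chosen via a result of Nakagawa--Horie to have no rational points, and then invokes an external theorem (rational approximation to algebraic varieties) to conclude that every point of the curve has irrationality exponent at most $2$; a projection argument finishes. You instead confine the image to the line $\mathbb{R}\boldsymbol{\alpha}$ with $\boldsymbol{\alpha}=(1,\alpha_2,\dots,\alpha_m)^t$ chosen so that $1,\alpha_2,\dots,\alpha_m$ are $\mathbb{Q}$-linearly independent and $(\alpha_2,\dots,\alpha_m)$ has finite irrationality exponent, and you prove directly, by an elementary transference estimate, that no point $t\boldsymbol{\alpha}$ is a Liouville vector (the case $t=0$ failing \eqref{eq:tiere}, the case $t\ne 0$ having its good approximations pushed down to approximations of $(\alpha_2,\dots,\alpha_m)$ at essentially the same exponent). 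Your argument is self-contained, avoiding both citations the paper relies on, and it handles rational and irrational $t$ uniformly; note that the external theorem the paper uses could not be applied to your line, since the line does contain the rational point $\mathbf{0}$. Your verification of the structural hypotheses is also complete: strong nowhere-constancy reduces, for $n=1$, to showing that no nonzero rational combination $\sum c_k f^k$ is constant, which is exactly the $\mathbb{Q}$-linear independence of $1,\alpha_2,\dots,\alpha_m$; and LIC follows since the first coordinate of $qf(A)$ is $qa_1$. A side benefit of your construction worth remarking on: it shows that strongly nowhere constant plus LIC is compatible with the image being a one-dimensional line, which is an even more degenerate counterexample than the hypersurface the paper produces.
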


This is stronger than Theorem~\ref{ier}, (ii), (iii), 
for the special case $m\ge 2, n=1$.
The principal idea of the proof is to consider a function
whose image is contained in an algebraic variety without rational points, which by
a result in~\cite{iche} means they contain no Liouville (column) vectors.

\begin{proof}
    Write $A=(a_1,\ldots,a_m)^{t}$ and define the coordinate functions of $f=(f^1,\ldots,f^m)^{t}$ by 
    \[
    f^j(a_{1}, \ldots, a_{m})=a_{j},\; (1\le j\le m-1),\qquad f^m(a_{1},\ldots, a_{m})=\sqrt[3]{a_{m-1}^2-N}
    \]
    where $N\in\mathbb{Z}$ is so that $Y^2=X^3+N$
    has no rational solution (see~\cite{horie} for existence).
   It is not hard to see that this induces a continuous, strongly nowhere constant function on $\mathbb{R}^m$ with LIC property.
Consider the projection 
$\Pi:\mathbb{R}^{m\times 1}\to \mathbb{R}^{2\times 1}$ to the last two coordinates, so that $\Pi(f(\mathbb{R}^{m\times 1}))$ equals the variety $X_{m}^3+N=X_{m-1}^2$ defined over $\mathbb{Q}[X_{m-1},X_m]$, without rational points. As there is no rational point 
    in $\Pi(f(\mathbb{R}^{m\times 1}))\subseteq \mathbb{R}^{2\times 1}$, from \cite[Theorem~2.1]{iche}
    we see that any such projected
    vector $\textbf{a}\in \Pi(f(\mathbb{R}^{m\times 1}))\subseteq \mathbb{R}^2$ has irrationality exponent $\omega^{2\times 1}(\textbf{a})\le 2$, hence $\textbf{a}\notin \mathscr{L}_{2,1}$. However, as clearly the irrationality exponent of a column vector cannot decrease under projection 
    $\Pi$ (see Proposition~\ref{hauser}),
    a fortiori any $\textbf{b}\in \Pi^{-1}(\textbf{a})$ has exponent at most $\omega^{m\times 1}(\textbf{b})\le \omega^{2\times 1}(\textbf{a})\le 2$ as well, thus $\textbf{b}\notin \mathscr{L}_{m,1}$. So since $\Pi^{-1}(\Pi(f(\mathbb{R}^{m\times 1}))\supseteq f(\mathbb{R}^{m\times 1})$ and $\textbf{a}\in \Pi(f(\mathbb{R}^{m\times 1}))$ was arbitrary, we have $f(\mathbb{R}^{m\times 1})\cap \mathscr{L}_{m,1}=\emptyset$, equivalent to the claim.
    \end{proof}

    \begin{remark}
    The function $f^m(a_{1}, \ldots, a_{m})=\sqrt{3-a_{m-1}^2}$ is an example of lower degree due to $X^2+Y^2=3$
    having no rational points again, however it is not globally defined.
    \end{remark}

    For $m=1$, a converse result in form of the analogue of Theorem~\ref{neu}
    clearly holds for strongly nowhere constant functions a fortiori, and presumably also 
    for $\mathscr{L}_{1,n}$ (without ``star''). This leaves the non-vector cases $\min\{m,n\}>1$ open where the situation seems unclear.

    \begin{problem}
        Determine all pairs $m,n$ for which the analogue of Theorem~\ref{analog} holds.
    \end{problem}

Above we have only considered self-mappings.
We want to finish this section with an open problem for a function
$f:\mathbb{R}\to \mathbb{R}^{n\times n}$.

\begin{problem} \label{p2}
    Given $A\in\mathbb{R}^{n\times n}$, what can be said about the set
    \[
    \mathcal{X}_A=\{ \lambda\in \mathbb{R}: A-\lambda I_n \in \mathscr{L}_{n,n}\}.
    \]
    Is it always non-empty/uncountable/dense $G_{\delta}$?
\end{problem}

It is not hard to see by topological arguments similar to Theorem~\ref{gdel} that for any $A$, all lines of $A-\lambda I_n$
can be made Liouville vectors in $\mathbb{R}^{1\times n}$ simultaneously for many (dense $G_{\delta}$ set of) 
$\lambda$, however this is in general insufficient for $A-\lambda I_n$ being in $\mathscr{L}_{n,n}$. This argument works if $A$ is diagonalizable via a rational base change matrix (in particular if $A$ is diagonal), via Proposition~\ref{pro}.
On the other hand, it is easy to construct $A$ such that for no $\lambda$ any column of $A-\lambda I_n$ is a Liouville vector in $\mathbb{R}^{m\times 1}$, however this is not necessary for $A-\lambda I_n$ being in $\mathscr{L}_{n,n}$. If $\mathcal{X}_A$ is non-empty, it is dense in $\mathbb{R}$ by its invariance under rational translation via Proposition~\ref{pro} again.
In general,
we do not know what to conjecture, but remark that Problem~\ref{p2} can be naturally generalized to $\mathcal{X}_{A,B}=\{ \lambda\in \mathbb{R}: A-\lambda B \in \mathscr{L}_{m,n}\}$ for $A,B\in \mathbb{R}^{m\times n}$.

\section{On a property of Burger}
Let us return to quadratic matrices. 
As noticed in~\S~\ref{oto}, it was shown by Erd\H{o}s~\cite{erd}
and is a special case of Corollary~\ref{KKo} that any real number can be written as the sum of two Liouville numbers.
Burger~\cite{burger} noticed that Erd\H{o}s' proof
can be adapted to show that any
transcendental real number can
be written as a sum of two algebraically independent Liouville numbers. The converse is easy to prove, giving a characterization of transcendental real numbers. 
See also~\cite[Propositon~3]{wald} for a generalization.
Let us consider the problem in the matrix setting. Naturally we call a quadratic real matrix algebraic (over $\mathbb{Z}$) if $P(A)=\textbf{0}$ for some non-zero polynomial $P\in \mathbb{Z}[X]$, otherwise transcendental. Similarly let us call two real $n\times n$ matrices
$B,C$ algebraically dependent (over $\mathbb{Z}$) if
there exists non-zero $P\in \mathbb{Z}[X,Y]$ so that
$P(B,C)=\textbf{0}$, otherwise algebraically independent. Note hereby that bivariate polynomials over the 
non-commutative matrix ring 
have a more complicated form as in the commutative case $n=1$, for
example $5X^{2}Y^{3}X-5X^3Y^3$ is not the $0$ polynomial
when $n\ge 2$. 
We study Burger's property in this setting 

\begin{problem} \label{p5}
  For $n\ge 2$, is it true that $A\in \mathbb{R}^{n\times n}$ is transcendental
if and only if it has a representation as a sum of two
algebraically independent Liouville matrices?
\end{problem}

For $n=2$, Problem~\ref{p5} has a negative answer
in a trivial sense,
as it turns out that

\begin{proposition}  \label{pp2}
    For $n\ge 2$, there are no algebraically independent matrix pairs at all.
\end{proposition}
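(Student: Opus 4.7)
The proposition asserts that for every $B,C\in\mathbb{R}^{n\times n}$ with $n\ge 2$, there is a non-zero $P\in\mathbb{Z}\langle X,Y\rangle$ in the free associative algebra with integer coefficients satisfying $P(B,C)=\mathbf{0}$. My plan is to produce a \emph{single universal} $P$ that kills every pair $(B,C)$ simultaneously, leveraging the fact that $M_n(\mathbb{R})$ is a polynomial identity (PI) algebra. This gives the claim in a much stronger form.

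The key input is the Amitsur--Levitzki theorem: the standard polynomial
\[
s_k(X_1,\ldots,X_k) := \sum_{\sigma\in S_k} \mathrm{sgn}(\sigma)\, X_{\sigma(1)}X_{\sigma(2)}\cdots X_{\sigma(k)}
\]
of degree $k=2n$ vanishes identically when the $X_i$ are substituted by arbitrary elements of $M_n(R)$ for any commutative ring $R$, in particular for $R=\mathbb{R}$. To reduce from $2n$ arguments to just two, I would make the substitution $X_i:=XY^i$ for $i=1,2,\ldots,2n$ and define
\[
P(X,Y) := s_{2n}\bigl(XY,\, XY^2,\, XY^3,\, \ldots,\, XY^{2n}\bigr) \;\in\; \mathbb{Z}\langle X,Y\rangle.
\]
By Amitsur--Levitzki, $P(B,C)=\mathbf{0}$ for every pair $B,C\in M_n(\mathbb{R})$.

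It then remains to verify that $P$ is \emph{not} the zero element of the free algebra. Expanding, each $\sigma\in S_{2n}$ contributes the word
\[
\mathrm{sgn}(\sigma)\cdot XY^{\sigma(1)} XY^{\sigma(2)}\cdots XY^{\sigma(2n)};
\]
distinct permutations $\sigma$ produce distinct ordered sequences of $Y$-exponents between the $X$'s, hence distinct words in the free monoid on $\{X,Y\}$. Consequently no cancellation occurs, and $P$ has exactly $(2n)!$ non-zero monomial coefficients in $\mathbb{Z}\langle X,Y\rangle$.

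The main (and mild) obstacle is precisely this last step: one must choose a substitution for which the standard polynomial does not collapse formally in the non-commutative polynomial ring, and naive choices (such as $X_i\in\{X,Y\}$ with repetitions) do exhibit massive cancellation. The exponent-distinguishing choice $X_i=XY^i$ resolves this transparently. For small $n$ one could alternatively invoke explicit low-degree identities (e.g.\ Hall's identity $[\,[X,Y]^2,X\,]\equiv\mathbf{0}$ on $M_2$), but Amitsur--Levitzki yields a uniform construction valid for all $n\ge 2$ in a single stroke.
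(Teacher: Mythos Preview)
Your proof is correct and follows essentially the same route as the paper: Hall's identity for $n=2$ and the Amitsur--Levitzki theorem for general $n\ge 2$. You go further than the paper by explicitly carrying out the reduction from the $2n$-variable standard identity to a two-variable identity via the substitution $X_i=XY^i$ and verifying non-triviality in $\mathbb{Z}\langle X,Y\rangle$, whereas the paper only says ``similar examples can be found'' and cites the PI-ring literature.
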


\begin{proof}
 For $n=2$ this is due to Hall's identity
\[
X(YZ-ZY)^2= (YZ-ZY)^2X
\]
for any integer (or real) $2\times 2$ matrices $X,Y,Z$. Indeed it suffices for example to let $X=Y=A, Z=B$ to see
that any two $2\times 2$ matrices $A, B$ are algebraically dependent according to our definition above.
It appears that similar examples can be found for general $n\ge 2$, 
as the matrix rings are all so-called
polynomial identity rings,
by the Amitsur–Levitzki Theorem. 
We refer to~\cite{dre}.
\end{proof}

It appears that the ordinary concept of algebraic independence is too strong over our matrix rings. However 
if we modify our definitions of transcendence and algebraic independence, Burger's problem becomes
more interesting. We propose to work with the following concepts.

\begin{definition}
    \begin{itemize}
        \item[(i)] 
    Call $A\in \mathbb{R}^{n\times n}$  {\em weakly algebraic} if there
    exists an integer $\ell\ge 0$ and a polynomial $P\in \mathbb{Z}[X_0,\ldots,X_{\ell}]$ so that 
    \[
    P(A,B_1,\ldots,B_{\ell})=\textbf{0}
    \]
    for any $B_1,\ldots,B_{\ell}\in \mathbb{R}^{n\times n}$ but
    \[
    P(C_0,\ldots,C_{\ell})\ne \textbf{0}
    \]
    for some $C_0,\ldots,C_{\ell}\in \mathbb{R}^{n\times n}$. 
\item[(ii)] Call $A,B\in \mathbb{R}^{n\times n}$ {\em weakly algebraically independent} if $P(A,B)=\textbf{0}$
for $P\in \mathbb{Z}[X,Y]$
implies $P(C_0,C_1)=\textbf{0}$ for all matrices $C_0,C_1\in \mathbb{R}^{n\times n}$.
    \end{itemize}
\end{definition}

We only want the polynomials as functions to act non-trivially in (i),(ii), essentially we factor out the
non-trivial polynomial relations in $\mathbb{Z}[X,Y]$ over the matrix ring.
Algebraic
implies weakly algebraic
as we may let $\ell=0$ and
the ring $\mathbb{Z}[X]$ in a single real $n\times n$, $n\ge 2$, 
matrix variable is not 
a polynomial identity ring (it is a principal ideal domain). The implication follows also from Theorem~\ref{subring} below. The concepts algebraic and weakly algebraic may however be equivalent,
part of Problem~\ref{ppr} below.
Moreover, it is immediate that weakly algebraically independent
implies algebraically independent in the classical sense. For $n=1$ the respective concepts coincide, if
we assume working over a commutative 
ring (i.e. identifying $AB$ with $BA$). On the other hand

\begin{proposition}
   For $n\ge 2$,
weakly algebraically independent matrix pairs exist.
\end{proposition}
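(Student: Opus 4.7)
The plan is to produce such a pair $(A,B)$ explicitly, exploiting the uncountable transcendence degree of $\mathbb{R}$ over $\mathbb{Q}$. I would choose $A=(a_{ij})$ and $B=(b_{ij})$ in $\mathbb{R}^{n\times n}$ so that the $2n^2$ real entries $\{a_{ij},b_{ij}\}$ are algebraically independent over $\mathbb{Q}$. Such a choice exists in abundance; indeed, the set of pairs $(A,B)\in\mathbb{R}^{n\times n}\times \mathbb{R}^{n\times n}$ whose joint entries are algebraically independent has full Lebesgue measure in $\mathbb{R}^{2n^2}$, as its complement is a countable union of proper algebraic hypersurfaces.

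The key observation I would make precise is that, for any non-commutative polynomial $P\in\mathbb{Z}[X,Y]$, the entries of $P(A,B)$ are obtained by substituting $a_{ij}, b_{ij}$ into fixed polynomials in $2n^2$ commuting formal variables. More concretely, there exist polynomials $Q_{k,\ell}^{P}\in \mathbb{Z}[x_{ij},y_{ij} : 1\le i,j\le n]$, depending only on $P$ (not on the choice of entries), such that the $(k,\ell)$-entry of $P(C_0,C_1)$ equals $Q_{k,\ell}^{P}$ evaluated at the entries of $C_0,C_1$, for every pair of matrices $C_0,C_1\in\mathbb{R}^{n\times n}$. This follows by induction on the length of monomials in $P$ and is the natural bookkeeping of non-commutative matrix multiplication.

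With this in place, the proof is direct. Suppose $P(A,B)=\mathbf{0}$. Then for every $k,\ell$, the polynomial $Q_{k,\ell}^{P}$ vanishes when evaluated at the algebraically independent reals $a_{ij},b_{ij}$. By algebraic independence this forces $Q_{k,\ell}^{P}\equiv 0$ in $\mathbb{Z}[x_{ij},y_{ij}]$ for each $k,\ell$. Hence, for any $C_0,C_1\in\mathbb{R}^{n\times n}$, the entries of $P(C_0,C_1)$ all vanish, so $P(C_0,C_1)=\mathbf{0}$. This is exactly the defining property of weak algebraic independence of $A$ and $B$.

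The only genuine point to verify carefully is the reduction from non-commutative polynomial evaluation on $n\times n$ matrices to commutative polynomial evaluation in the $2n^2$ entries. Once that bookkeeping is made clean, the argument rests only on the elementary fact that $\mathbb{R}$ has algebraically independent subsets of any finite cardinality over $\mathbb{Q}$, so no serious obstacle arises. As a side benefit, the same reasoning yields weakly algebraically independent pairs in any finite (or even countable) collection, which may be useful for later strengthenings related to Problem~\ref{p5}.
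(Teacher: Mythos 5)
Your proof is correct. The key bookkeeping step --- that each entry of $P(C_0,C_1)$ is a fixed commutative polynomial with integer coefficients in the $2n^2$ entries of $C_0,C_1$ --- is exactly the reduction the paper also uses. Where you differ is in how you certify genericity: the paper fixes a $P$ not inducing the zero function, notes that some entry polynomial $P_{i_0,j_0}$ is then not identically zero, so the pairs with $P(A,B)\ne \mathbf{0}$ form a full-measure set, and intersects over the countably many $P\in\mathbb{Z}[X,Y]$; you instead pick a single pair whose joint entries are algebraically independent over $\mathbb{Q}$ and run the contrapositive, concluding that any vanishing relation $P(A,B)=\mathbf{0}$ forces all entry polynomials $Q^{P}_{k,\ell}$ to vanish identically, hence $P$ induces the zero function. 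The two genericity notions are interchangeable here (your set of pairs is itself of full measure, as you note, and the paper's full-measure set contains pairs with algebraically independent entries), so both arguments yield the same slightly stronger conclusion that almost every pair is weakly algebraically independent. Your variant has the small advantage of handling all $P$ at once through a single field-theoretic condition rather than a countable intersection, and, as you observe, it extends verbatim to any finite or countable family of matrices.
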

 Thus, comparing with Proposition~\ref{pp2}, weakly algebraically independent is a strictly weaker concept than algebraically independent. 
 
 \begin{proof}
     Fix for now 
$P\in \mathbb{Z}[X,Y]$ that does not induce the $\textbf{0}$ function. Then,
as any entry $P_{i,j}(A,B)$, $1\le i,j\le n$, of $P(A,B)$ is a scalar-valued multivariate polynomial in the entries of $A=(a_{i,j})$ and $B=(b_{i,j})$
and some $P_{i_0,j_0}(A,B)$ is not identical $0$, we find many (full Lebesgue measure set in $\mathbb{R}^{2n^2}$) real matrix pairs $A,B$
so that $P_{i_0,j_0}(A,B)\ne 0$ thus also $P(A,B)\ne \textbf{0}$. Since
$\mathbb{Z}[X,Y]$ is only countable
we are still left with a full measure set
that avoids all algebraic varieties.
 \end{proof}

Alternatively the claim follows from the next Theorem~\ref{subring}.
It generalizes Burger's result,
with similar underlying proof ideas. However,
we use a topological argument rather
than a cardinality consideration as in~\cite{burger}, and a few more twists.

\begin{theorem} \label{subring}
    Let $n\ge 1$ and $A\in\mathbb{R}^{n\times n}$. Then
    \begin{itemize}
      \item[(i)] if $A=B+C$
 holds for some weakly algebraically independent $B,C\in\mathbb{R}^{n\times n}$, then $A$ is transcendental.
        \item[(ii)] if $A$ is not weakly algebraic (in particular then $A$ is transcendental) then
there exist weakly algebraically independent $B,C\in\mathscr{L}_{n,n}$ with $B+C=A$. 
    \end{itemize}

\end{theorem}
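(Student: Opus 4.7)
My plan is to handle (i) by a short substitution trick and (ii) via a Baire category argument in $\mathbb{R}^{n^2}$ combining Theorem~\ref{gdel} with a substitution-based algebraic lemma. For (i), I would argue the contrapositive. Suppose $A$ is algebraic, with $P(A) = \mathbf{0}$ for some non-zero $P \in \mathbb{Z}[X]$. Define $Q(X, Y) := P(X + Y) \in \mathbb{Z}[X, Y]$ in the non-commutative polynomial ring over $\mathbb{Z}$. Since $Q(X, \mathbf{0}) = P(X)$ does not induce the zero function on $\mathbb{R}^{n \times n}$ (as recalled in the paper, $\mathbb{Z}[X]$ in one matrix variable is not a polynomial identity ring), $Q$ itself induces a non-zero function on matrix pairs. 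But $Q(B, C) = P(B + C) = P(A) = \mathbf{0}$ for any $B + C = A$, so $B, C$ fail to be weakly algebraically independent.

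For (ii), I parameterize pairs with $B + C = A$ by $B \in \mathbb{R}^{n \times n} \cong \mathbb{R}^{n^2}$, with $C := A - B$. Applying Theorem~\ref{gdel} simultaneously to the identity and to the continuous injection $B \mapsto A - B$, the set $G := \{B : B,\, A - B \in \mathscr{L}_{n,n}\}$ is dense $G_\delta$ in $\mathbb{R}^{n^2}$. For each of the countably many polynomials $P \in \mathbb{Z}[X, Y]$ that induce a non-zero function on matrix pairs, let
\[
V_P := \{B \in \mathbb{R}^{n \times n} : P(B, A - B) \ne \mathbf{0}\};
\]
since the entries of $P(B, A - B)$ are real polynomials in the entries of $B$, $V_P$ is open, and its complement is either all of $\mathbb{R}^{n^2}$ or a proper algebraic subset (hence nowhere dense). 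Once I know the latter case holds for every such $P$, Baire's theorem gives a point $B \in G \cap \bigcap_P V_P$, and $B$ together with $C := A - B$ is the desired pair.

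The main obstacle is the density of each $V_P$, i.e.\ that $P(B, A - B)$ does not vanish identically in $B$ when $P$ induces a non-zero function on pairs. To handle this I would introduce $\tilde{P}(X_0, X_1) := P(X_1, X_0 - X_1) \in \mathbb{Z}[X_0, X_1]$. The affine substitution $(X_0, X_1) \mapsto (X_1, X_0 - X_1)$ is a self-bijection of $(\mathbb{R}^{n \times n})^2$ with inverse $(X, Y) \mapsto (X + Y, X)$, so $\tilde{P}$ induces the zero function on pairs if and only if $P$ does. Hence if $P(B, A - B) = \mathbf{0}$ held for every $B$, then $\tilde{P}(A, B) = \mathbf{0}$ for every $B$ while $\tilde{P}$ is non-trivial as a function, exhibiting $A$ as weakly algebraic (with $\ell = 1$) and contradicting the hypothesis of (ii). The remaining bookkeeping, including that integrality of coefficients survives the substitution because the only scalar introduced is $-1$, is routine, and then the Baire step concludes the proof.
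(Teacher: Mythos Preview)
Your proof is correct and follows essentially the same route as the paper's: for (i) the contrapositive via $Q(X,Y)=P(X+Y)$, and for (ii) the Baire argument combining Theorem~\ref{gdel} with the substitution $\tilde{P}(X_0,X_1)=P(X_1,X_0-X_1)$ (the paper's $R$) to rule out identically vanishing $P(B,A-B)$ under the hypothesis that $A$ is not weakly algebraic. The only cosmetic difference is that you phrase the dichotomy for $V_P^c$ as ``proper algebraic subset, hence nowhere dense'' whereas the paper argues ``non-empty interior forces $\mathscr{G}_{P,A}=\mathbb{R}^{n\times n}$''; these are equivalent.
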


In fact in (ii) we only need to assume $A$ not weakly algebraic for 
$\ell=1$, a stronger claim. As stated above,
for $n=1$ we get a new proof of 
Burger's result avoiding Bezout's Theorem. However, for $n\ge 2$, (i), (ii) do not give rise to any logical equivalence.
Consider the claims: (I) $A$ is
    transcendental.
    (II) $A$ is not weakly algebraic. (III) $A$ can be written $A=B+C$ with $B,C$ weakly algebraically independent. Then Theorem~\ref{subring} is equivalent to $(II)\Longrightarrow (III)\Longrightarrow (I)$. 
Indeed, the following remains open.

\begin{problem} \label{ppr}
    When $n\ge 2$, is it actually true that $(I)\Longleftrightarrow (III)$, or even
    $(I)\Longleftrightarrow (II)\Longleftrightarrow (III)$? 
\end{problem}

Clearly if (and only if) weakly algebraic is actually the same as algebraic, then we have the full equivalence.
Possibly generalizations of Theorem~\ref{subring} as in~\cite[Proposition~3]{wald} to more general expressions (polynomials) in $B,C$ in place of the plain sum $B+C$ hold, however our proofs of neither (i) nor (ii) extend in an obvious way. Moreover, there may be alternative natural variants of Burger's problem for matrices to the one discussed above worth studying. We stop our investigation here.

\section{Proof of Theorem~\ref{1} }


First let $n=2$.
The principal idea is to consider matrices
of the form
\[
	A=\begin{pmatrix}
	a & b \\
	0 & 0
	\end{pmatrix}
	\] 
	where $b\in \mathscr{L}_{1,1}$ is any fixed Liouville number and $a$ to be chosen later. 
	Then $A\in \mathscr{L}_{2,2}$ for any $a$ not linearly dependent with $\{1,b\}$ over $\mathbb{Z}$,
 since $\vert q b-p\vert=\Vert A\cdot (0,q)^{t}-(0,p)^{t}\Vert$ for any $p,q$.  First consider just one function $f(z)=f_1(z)= \sum_{j\ge 0} c_j z^j$. We will find many $a$
	so that $\textswab{f}(A)= \sum c_j A^j$ is not Liouville. Note first that
		\[
	A^j= \begin{pmatrix}
	a^j & b a^{j-1} \\
	0 & 0
	\end{pmatrix}, \qquad j\ge 1
	\] 
 so
 \[
 \textswab{f}(A)=\begin{pmatrix}
	r(a) & s(a) \\
	0 & c_0
	\end{pmatrix}
 \]
	where 
    \[
    r(z)=\sum_{j\ge 0} c_{j} z^j= f(z), \qquad s(z)= \sum_{j\ge 0} bc_{j+1} z^{j}. 
    \]
    Hence
       \begin{equation} \label{eq:ratz}
    r(z)-c_0 = \frac{1}{b}\cdot s(z)z.
    \end{equation}
	Then for $\textbf{v}_a:=(r(a), s(a))\in \mathbb{R}^{1\times 2}$, Proposition~\ref{hauser} implies 
 $\omega^{2\times 2}(\textswab{f}(A))\le \omega^{1\times 2}(\textbf{v}_a)$, with equality if $c_0=0$ (as the additional Diophantine condition 
 on $|q_2 c_0-p_2|$ holds for $p_2=0$ if $c_0=0$). So it suffices to show $\omega^{1\times 2}(\textbf{v}_a)= 2$ for many $a$. 

    Now  $\mathcal{C}: =\{ (r(a), s(a)): a\in\mathbb{R}\}$ defines an analytic curve in
    $\mathbb{R}^2$. We use a deep result
    from~\cite{km} to conclude.
    Let us call a planar curve locally parametrized by $(x,f(x))$ non-degenerate if the critical points of
    $f^{\prime\prime}(x)$, i.e.
    where it vanishes or does not exist,
    happens only for a set $x$ of Lebesgue measure $0$.

    \begin{theorem}[Kleinbock, Margulis]
        Let $\mathcal{C}$ be a non-degenerate planar curve given by parametrization $(x(t),y(t))$, $t\in I$. Then for almost
        all $t\in I$ with respect to Lebesgue measure we have
        $\omega^{1\times 2}(x(t),y(t))=2$.
    \end{theorem}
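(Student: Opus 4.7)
The plan is to follow the Kleinbock--Margulis dynamical approach, translating the Diophantine problem on the curve into a question about orbits of a diagonal flow on the space of unimodular lattices, and then exploiting quantitative non-divergence of unipotent trajectories. The lower bound $\omega^{1\times 2}(x(t),y(t))\ge 2$ is free at every $t$ for which $1,x(t),y(t)$ are $\mathbb{Q}$-linearly independent (a full-measure condition on a non-degenerate curve, since each rational hyperplane relation cuts out a proper real-analytic subset of the curve and there are only countably many such relations). So only the matching upper bound is substantive.

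For the upper bound, encode each point $\mathbf{v}=(x,y)\in\mathbb{R}^{1\times 2}$ by the unimodular lattice $u_{\mathbf{v}}\mathbb{Z}^3$, where
\[
u_{\mathbf{v}}=\begin{pmatrix} 1 & x & y\\ 0 & 1 & 0 \\ 0 & 0 & 1\end{pmatrix}\in \mathrm{SL}_3(\mathbb{R}),
\]
and let $g_s=\mathrm{diag}(e^{2s},e^{-s},e^{-s})$ act on $X_3=\mathrm{SL}_3(\mathbb{R})/\mathrm{SL}_3(\mathbb{Z})$. The Dani correspondence says that $\omega^{1\times 2}(\mathbf{v})>w$ precisely when the orbit $\{g_s u_{\mathbf{v}}\mathbb{Z}^3\}_{s\ge 0}$ contains lattice vectors of norm $\le \exp(-\delta(w)\, s)$ at arbitrarily large $s$, with $\delta(w)>0$ when $w>2$. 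So it suffices to show that for almost every $t\in I$, the lifted curve trajectory $s\mapsto g_s u_{(x(t),y(t))}\mathbb{Z}^3$ does not dive arbitrarily deep into the Mahler cusp of $X_3$ on an unbounded set of scales.

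This is where the main obstacle lies: one must prove a quantitative non-divergence estimate for the curve $t\mapsto g_s u_{(x(t),y(t))}$ as $s$ varies. The required technical input is that the coordinate functions of this matrix curve, together with all of their exterior products arising as norms of primitive sublattices in $\bigwedge^{k}\mathbb{R}^3$, satisfy the Kleinbock--Margulis $(C,\alpha)$-good property on subintervals $J\subseteq I$: the Lebesgue measure of every sublevel set is bounded by $C$ times a power $\alpha$ of the ratio of the threshold to the sup-norm of the function on $J$. Non-degeneracy of the curve is exactly what delivers this property for all the needed analytic combinations, via Taylor expansion and an inductive covering argument on subintervals where the ``curvature'' determinant $x'y''-y'x''$ stays bounded away from zero. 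Granted the $(C,\alpha)$-good property, the Kleinbock--Margulis non-divergence theorem yields, for every scale $S\ge 1$ and every $\varepsilon>0$, that the $t$-measure of parameters whose trajectory enters the $\varepsilon$-cusp during $s\in[S,2S]$ is at most $C'\varepsilon^{\alpha}|I|$, with constants independent of $S$.

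The endgame is Borel--Cantelli. Fix $w>2$ and let $A_k\subseteq I$ be the set of $t$ whose lifted trajectory enters the cusp at depth $\exp(-\delta(w)\cdot 2^k)$ during $s\in[2^k,2^{k+1}]$. The measure estimate gives $|A_k|\ll \exp(-\alpha\delta(w)\cdot 2^k)$, which is summable, so $\limsup_{k} A_k$ is a null set. Hence for almost every $t\in I$ we have $\omega^{1\times 2}(x(t),y(t))\le w$. Intersecting over a countable sequence $w_n\downarrow 2$ yields $\omega^{1\times 2}(x(t),y(t))\le 2$ almost everywhere, and combined with the trivial lower bound this proves the theorem.
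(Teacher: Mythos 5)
This statement is not proved in the paper at all: it is imported verbatim as a black box from Kleinbock--Margulis \cite{km} (the text says ``we use a deep result from~\cite{km} to conclude''), so there is no internal proof to compare against. What you have written is a reasonable reconstruction of the architecture of the original argument in \cite{km}: the embedding $\mathbf{v}\mapsto u_{\mathbf{v}}\mathbb{Z}^3$, the flow $g_s=\mathrm{diag}(e^{2s},e^{-s},e^{-s})$, the Dani correspondence between $\omega^{1\times 2}(\mathbf{v})>w$ and linear cusp excursions, the $(C,\alpha)$-good property of the relevant functions and their exterior-power minors, quantitative non-divergence, and a Borel--Cantelli summation over scales. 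Two caveats. First, your sketch states only one of the two hypotheses of the non-divergence theorem: besides $(C,\alpha)$-goodness one also needs the \emph{nonplanarity} input, i.e.\ a uniform lower bound on $\sup_{t\in J}\|g_s u_{(x(t),y(t))}\mathbf{w}\|$ over each subinterval $J$ and each primitive $\mathbf{w}\in\bigwedge^k\mathbb{Z}^3$; this is where non-degeneracy (the functions $1,x(t),y(t)$ being linearly independent on every subinterval, with second-order derivatives spanning almost everywhere) enters just as crucially, and without it the measure estimate is simply false (e.g.\ for a curve inside a rational hyperplane). Second, the non-divergence theorem gives the measure bound for each \emph{fixed} time $s$, not for the event ``enters the cusp at some $s\in[S,2S]$''; the standard remedy (used in \cite{km}) is to run the correspondence and the Borel--Cantelli argument over integer times only, which suffices. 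With those two points made explicit, your proposal is a faithful reduction of the theorem to the quantitative non-divergence estimate of \cite{km}, which of course remains the substantial unproved core.
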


    So if $\mathcal{C}$ above is non-degenerate,
    we know that for almost all $a$ the 
    point $\textbf{v}_a$ indeed has Diophantine exponent $2$. By omitting additionally the countable set of $a$ where \eqref{eq:tiere} fails, i.e. excluding elements $\mathbb{Q}$-linearly dependent with $\{ b,1\}$, to make $A$ a Liouville matrix,
    we are still left with a full measure set, so we are done.
    (In fact such $a$ do not exist
    as $\mathbb{Q}$-linear dependence
    directly implies $\omega^{1\times 2}(\textbf{v}_a)=\infty>2$.)
    
    So suppose conversely that $\mathcal{C}$ is degenerate.
    Since $\mathcal{C}$ is defined via analytic entry functions, the zeros
    of the according second derivative
    form a countable discrete set unless constant $0$, so
    the second derivative must vanish everywhere. But this requires
    \[
    \frac{d^2s(z)}{d^2r(z)}= \frac{ d (s_z/r_z) }{dr_z} =
    \frac{\frac{d(s_z/r_z)}{dz} }{\frac{dr}{dz}}=
    \frac{ s_{zz} r_z - s_z r_{zz} }{ r_z^3 }= 0
    \]
    identically or identically undefined.
    If $s_z\equiv 0$ by \eqref{eq:ratz} this yields $f(z)=r(z)=c_0+c_1 z$
    for some real numbers $c_0, c_1$, among
    the functions \eqref{eq:fro} excluded in the theorem. So
    assume $s_z\not\equiv 0$.
    Then $r_z/s_z\equiv d$
    is constant. We may assume $d\ne 0$
    as $d=0$ leads to
    $f(z)=r(z)=c_0$ constant, again excluded.
    Then, again by \eqref{eq:ratz}, the condition becomes
    \[
    \frac{1}{b}\cdot \frac{s_z(z)z+s(z)}{s_z}= \frac{1}{b} (z+s/s_z) \equiv d
    \]
    or $z+s/s_z\equiv g$ for a new constant $g=bd$. This is further equivalent to 
    \[
    (\log s)_z= \frac{s_z}{s} \equiv -\frac{1}{z-g}
    \]
    or $\log s= - \log |z-g|+ h$ for some $h$, finally $s(z)= H/(z-g)$ for $H=e^h$. Hence
    \[
    f(z)=r(z)= \frac{1}{b}\cdot s(z)z+c_0=
    c_0 + J\frac{z}{z-g}= c_0+J+\frac{Jg}{z-g}= \tilde{c}_0+ \frac{ \tilde{c}_1 }{z-g}
    \]
    for some real numbers $J=H/b$ and $\tilde{c}_i=Jg$, again of the form 
    \eqref{eq:fro} excluded in the theorem. The argument for a single function is complete.
%
%

    We can apply this argument for any $f=f_k$. Since we get a full measure set
    of $a\in\mathbb{R}$ for any $k\ge 1$, their countable intersection again has full measure. So $n=2$ is done.
    
For larger $n$, let us for simplicity denote simultaneously
$\textswab{f}_k$ derived from $f_k$ via 
\eqref{eq:ex} for matrices in arbitrary dimension (we will use dimension $2$, $n-2$ and $n$).
Then we can just take a matrix consisting of two diagonal blocks
\[
A= \rm{diag}( A_2 , B )
\]
with $A_2\in \mathscr{L}_{2,2}$ as above and $B$ any real $(n-2)\times (n-2)$ matrix so that 
all $\textswab{f}_k(B)$, $k\ge 1$, have irrationality exponent $\omega^{(n-2)\times (n-2)}(\textswab{f}_k(B))=1$.
For any $k$ it is easily seen
that such $B$ form a full Lebesgue measure
set in $\mathbb{R}^{(n-2)^2}$. Indeed,
this follows
from the locally bi-Lipschitz property of the analytic maps
$B\to \textswab{f}_k(B)$ and a standard Khintchine type result that the set of matrices 
\[
\{ C\in \mathbb{R}^{(n-2)\times (n-2)}:\omega^{(n-2)\times (n-2)}(C)=1\}
\]
has full Lebesgue measure
in $\mathbb{R}^{(n-2)^2}$.
So the same holds for the infinite intersection over $k\ge 1$ as requested. It is easily checked that any resulting $A\in \mathscr{L}_{n,n}$ since $A_2\in \mathscr{L}_{2,2}$
and the system is decoupled. On the other hand for $n\ge 2, k\ge 1$ we have
\[
\omega^{n\times n}(\textswab{f}_k(A))= \max\{ \omega^{2\times 2}(\textswab{f}_k(A_2)), \omega^{(n-2)\times (n-2)}(\textswab{f}_k(B)) \}\le \max\{ 2,1\}= 2,
\]
with equality in the inequality if $f_{k}(0)=0$, where for the first identity again we used that the system is decoupled (see also~\cite[Lemma~9.1]{zite}). So $\textswab{f}_k(A)\notin \mathscr{L}_{n,n}$. 

Finally as the set of suitable $a\in\mathbb{R}$ and $B\in \mathbb{R}^{(n-2)\times (n-2)}$ have full Lebesgue measure in the respective Euclidean spaces, the metrical claim follows from a 
standard estimate on the Hausdorff dimension Cartesian products $\dim_H(X\times Y)\ge \dim_H (X)+\dim_H (Y)$ for measurable $X,Y$, 
see Tricot~\cite{tricot}.

\begin{remark}
    By choosing $b$ an ultra-Liouville 
    number as defined in~\cite{s}, the
    arising matrix $A$ will have an analogous property. This justifies Remark~\ref{remrand}.
\end{remark}

 \begin{remark}
     Alternatively for $n\ge 3$ we can consider $A=\rm{diag}(A_2, A_2,\ldots,A_2)$ for $n$ even
     and $A=\rm{diag}(A_2, A_2,\ldots,A_2, \{ \ell\})$ for $n$ odd,
     where $\ell\in\mathbb{R}$ is a number all of whose evaluations $f_k(\ell)$ have exponent $\omega^{1\times 1}(f_k(\ell))=1$ (which again exists by the same metrical argument as in the proof above). However, this gives a weaker metrical bound $\lceil n/2\rceil$.
 \end{remark}

\section{Proof of Theorem~\ref{t2} } \label{proof}

In view of condition \eqref{eq:tiere},
we need the following technical lemma.
Possibly stronger claims are known, but we have found no reference, so we prove it directly using the metrical sparsity 
of zeros of multivariate real power series and a diagonalization argument.

\begin{lemma} \label{depat}
Let $n\ge 1$ be an integer, $I\ni 0$ be an open interval and $f: I\to \mathbb{R}$ be any non-constant analytic map. Let $\mathcal{S}\subseteq \mathbb{R}^{n\times n}\cong \mathbb{R}^{n^2}$ be a proper affine subspace.
Then for $\textswab{f}: U\subseteq \mathbb{R}^{n\times n}\to \mathbb{R}^{n\times n}$ the extension of $f$ via \eqref{eq:ex}, the preimage
$\textswab{f}^{-1}(\mathcal{S})$ has $n^2$-dimensional Lebesgue measure zero. 
\end{lemma}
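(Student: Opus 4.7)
The plan is to reduce to showing a certain real analytic function is not identically zero and then invoke the classical fact that the zero set of such a function has full-dimensional Lebesgue measure zero. Write $\mathcal{U}\subseteq \mathbb{R}^{n\times n}\cong \mathbb{R}^{n^2}$ for the open (convex, hence connected) domain on which $\textswab{f}$ is defined, namely the operator norm ball of radius $r$ about $0$, where $r$ is the radius of convergence of $f$ at $0$. Since every proper affine subspace of $\mathbb{R}^{n^2}$ is contained in an affine hyperplane, it suffices to treat $\mathcal{S}=\{M\in\mathbb{R}^{n\times n}:L(M)=c\}$ for some non-zero linear functional $L:\mathbb{R}^{n\times n}\to\mathbb{R}$ and some real $c$. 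Then $\textswab{f}^{-1}(\mathcal{S})\cap\mathcal{U}$ is the zero set of
\[
g(A):=L(\textswab{f}(A))-c=\sum_{j\ge 0} c_j L(A^j)-c,
\]
which is real analytic in the $n^2$ entries of $A$. By the standard inductive Fubini slicing argument, if $g\not\equiv 0$ on the connected open set $\mathcal{U}$ then its zero set has $n^2$-dimensional Lebesgue measure zero, so the task reduces to verifying $g\not\equiv 0$.

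For this I would argue by contradiction via a diagonalization step. If $g\equiv 0$ on $\mathcal{U}$ then for each fixed $E\in\mathbb{R}^{n\times n}$ and all sufficiently small $t\in\mathbb{R}$ the matrix $tE$ lies in $\mathcal{U}$ and $\textswab{f}(tE)=\sum_{j\ge 0}c_j t^j E^j$, hence
\[
g(tE)=\sum_{j\ge 0} c_j L(E^j)\,t^j-c
\]
vanishes identically in $t$ near $0$. Matching coefficients gives $c_j L(E^j)=0$ for every $j\ge 1$ and every $E$. Since $f$ is non-constant, there exists $j_0\ge 1$ with $c_{j_0}\ne 0$, and we deduce that the non-zero linear functional $L$ annihilates every $j_0$-th power of a real $n\times n$ matrix.

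I expect the main obstacle to be the following short but pivotal spanning lemma: for each $j\ge 1$, the $\mathbb{R}$-linear span of $\{E^j:E\in\mathbb{R}^{n\times n}\}$ equals all of $\mathbb{R}^{n\times n}$. Once this is established, applying it with $j_0$ forces $L=0$, contradicting $L\ne 0$ and finishing the proof. I would prove the spanning claim by a perturbation about the identity: fix any $M\in\mathbb{R}^{n\times n}$ and consider $(I_n+tM)^j$, which (because $I_n$ commutes with $M$) admits the binomial expansion $(I_n+tM)^j=\sum_{k=0}^{j}\binom{j}{k}t^k M^k$. As $t$ ranges over $\mathbb{R}$, every such $(I_n+tM)^j$ lies in the span $V$ of $j$-th powers; evaluating at $j+1$ distinct values of $t$ and inverting the resulting Vandermonde system expresses each coefficient $\binom{j}{k}M^k$ as an $\mathbb{R}$-linear combination of elements of $V$. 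In particular $jM\in V$, hence $M\in V$, and since $M$ was arbitrary $V=\mathbb{R}^{n\times n}$, completing the argument.
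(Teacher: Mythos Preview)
Your argument is correct, and it takes a genuinely different route from the paper's. Both proofs agree up to the reduction: set $g(A)=L(\textswab{f}(A))-c$ for a non-zero linear functional $L$, note that $g$ is real analytic on the connected domain, and invoke the standard fact that a non-identically-vanishing real analytic function has a null zero set. The divergence is in how one rules out $g\equiv 0$. The paper argues on the \emph{image} side: if $g\equiv 0$ then $\textswab{f}(U)\subseteq\mathcal{S}$, but since $f$ is non-constant the scalar image $W=f(I)$ is a non-degenerate interval, and every diagonalizable matrix with simple real spectrum inside $W$ lies in $\textswab{f}(U)$; a continuity-of-eigenvalues perturbation then shows the image has non-empty interior, contradicting containment in a hyperplane. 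Your argument instead works on the \emph{domain} side: restrict $g$ to rays $t\mapsto tE$, read off $c_jL(E^j)=0$ for all $j\ge 1$ and all $E$, pick $j_0\ge 1$ with $c_{j_0}\ne 0$, and finish with the spanning lemma that $j$-th powers span $\mathbb{R}^{n\times n}$, proved cleanly via the commutative binomial expansion of $(I_n+tM)^j$ and a Vandermonde inversion. Your approach is more algebraic and arguably more elementary, avoiding the eigenvalue-perturbation step entirely; the paper's approach has the mild advantage of yielding the stronger geometric statement that $\textswab{f}(U)$ has non-empty interior, though this is not needed for the lemma as stated.
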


\begin{proof}
 The image $\textswab{f}(A)= \sum c_j A^j$ of some $A=(a_{i,j})$ has each entry a scalar power series in its $n^2$ scalar entries, say $P_{i,j}(a_{1,1},\ldots,a_{n,n})\in \mathbb{R}[[X_1,\ldots,X_{n^2}]]$
 for $1\le i,j\le n$. So if $\textswab{f}(A)$ lies in a proper affine subspace 
 $\mathcal{S}$ with
 equation $d_{0}+d_{1,1} x_1+\cdots+d_{n,n}x_{n^2}=0$, $d_0, d_{i,j}\in \mathbb{R}$ not all $0$,
 it satisfies some fixed scalar power series equation
 \[
 Q(a_{1,1},\ldots,a_{n,n})= \sum_{1\le i,j\le n} d_{i,j} P_{i,j}(a_{1,1},\ldots,a_{n,n})=0
 \]
 for $Q(a_{1,1},\ldots,a_{n,n})\in \mathbb{R}[[X_1,\ldots,X_{n^2}]]$ 
 a power series in $n^2$ variables $a_{1,1},\ldots,a_{n,n}$. If $Q\not\equiv 0$
 does not vanish identically, it is
 well-known and for example consequence of Lebesgue Density Theorem that only a set of Lebesgue measure zero in $\mathbb{R}^{n^2}$ can satisfy such an identity, and we are done. So assume $Q\equiv 0$. This means every $A\in \mathbb{R}^{n\times n}$ satisfies $\textswab{f}(A)\in \mathcal{S}$, so $\textswab{f}(U)\subseteq \mathcal{S}$. However, this forces $f$ to be constant as we show. 
 Let $W:=f(I)\subseteq \mathbb{R}$ be
the image of the real function $f: I\to \mathbb{R}$. 
Then, if $f$ is not constant, by its continuity $W$ is an interval with non-empty interior.
Then any matrix
of the form $B=PDP^{-1}$
with $D=\rm{diag}(d_1,\ldots,d_n)\in \mathbb{R}^{n\times n}$ for $d_i\in W$
and $P\in \mathbb{R}^{n\times n}$ invertible is in the image $\textswab{f}(U)$.
Indeed, using \eqref{eq:ex} we have $B=\textswab{f}(PCP^{-1})$ with $C=\rm{diag}( f^{-1}(d_1), \ldots, f^{-1}(d_n))$ where we choose any preimages if they are not unique. However, we claim that the set of obtained matrices $B$ has non-empty interior, contradicting $\textswab{f}(U)\subseteq \mathcal{S}$. Indeed, choose any 
$(d_1,\ldots,d_n)\in W^n$ with pairwise distinct $d_i\ne d_j$ not on the boundary of $W$ (if exists). Then for any induced $B$ it follows from the continuous dependency  
of the eigenvalues from the matrix entries that any matrix $E\in \mathbb{R}^{n\times n}$ within some neighborhood of $B$ in $\mathbb{R}^{n^2}$ still has pairwise distinct real eigenvalues in $W$ (if some were complex they would come in pairs and we would have too many). Thus by the above argument applied to some $\tilde{D}=\rm{diag}(\tilde{d}_1,\ldots,\tilde{d}_n)\in \mathbb{R}^{n\times n}$ for $\tilde{d}_i\in W$ and $\tilde{P}$, 
the matrix $E$ is also contained in the image $\textswab{f}(U)$.
\end{proof}

For $n=1$ the claim follows directly from~\cite{asa}, noticing that any 
non-constant analytic function on an interval $I$ is nowhere constant on $I$ by Identity Theorem (see also~\S~\ref{intro}).
For $n>1$ we reduce it to this case.
Consider $A= \rm{diag}(a, B)$ for $a\in \mathscr{L}_{1,1}$ any Liouville number as above, i.e such that $f_k(a)\in \mathscr{L}_{1,1}$ as well, and $B$ for the moment an arbitrary real $(n-1)\times (n-1)$ matrix.
Since $f_k$ are analytic we have 
\[
\textswab{f}_k(A)= \rm{diag}(f_k(a), \textswab{f}_k(B)),
\]
where by abuse of notation we keep the notation $\textswab{f}_k$ for maps $\textswab{f}_k: \tilde{U}\subseteq \mathbb{R}^{(n-1)\times (n-1)}\to \mathbb{R}^{(n-1)\times (n-1)}$ defined in the same way as in \eqref{eq:ex}.
Hence
the system of inequalities induced by $\textswab{f}_k(A)\textbf{q}-\textbf{p}$ decouples.
So $|q_1 f_k(a)- p_1|< q_1^{-N}$ for some $p_1,q_1, N$ implies $\textbf{p}=(p_1,0,\ldots,0)^{t}$ and $\textbf{q}=(q_1,0,\ldots,0)^{t}$ will induce equally good approximations
$\Vert \textswab{f}_k(A)\textbf{q}-\textbf{p}\Vert=|q_1 f_k(a)-p_1|$ and $\Vert \textbf{q}\Vert=|q_1|$,
so $\Vert \textswab{f}_k(A)\textbf{q}-\textbf{p}\Vert< \Vert \textbf{q}\Vert^{-N}$.
By irrationality of $a$ clearly $\textswab{f}_k(B)$ and $\textswab{f}_k(A)$, $k\ge 0$, satisfying \eqref{eq:tiere} is equivalent, 
with $\textswab{f}_0(C):=C$
the identity map. So
it suffices to show that a full measure set of $B$ also gives rise to $\textswab{f}_k(B)$, $k\ge 0$, satisfying \eqref{eq:tiere}.

Choose $B$ with the property that for any $C_k:=\textswab{f}_k(B)$, $k\ge 0$, there is no non-trivial relation
$C_k\textbf{q}_k-\textbf{p}_k=\textbf{0}$ with $(\textbf{p}_k,\textbf{q}_k)\in \mathbb{Z}^m\times \mathbb{Z}^n$. 
We show that this is possible.
Indeed, any such relation restricts $C_k$ to a proper affine rational subspace $\mathcal{S}_k$ of 
$\mathbb{R}^{(n-1)^2}$, so the exceptional $B=\textswab{f}_k^{-1}(C_k)\subseteq \textswab{f}_k^{-1}(\mathcal{S}_k)$ lie in the preimage of this subspace, thus by Lemma~\ref{depat} applied for dimension $n-1$
and $\mathcal{S}=\mathcal{S}_k$
form a set of Lebesgue measure zero. Since there are only countably many $f_k$ and countably
many relations (affine rational subspaces), a full measure set in the complement remains to choose $B$ from
that give rise to $A$ as in the theorem.


\section{Proof of Theorem~\ref{subring} }

  For (i),
 assume $A\in \mathbb{R}^{n\times n}$ is algebraic over $\mathbb{Z}[X]$. Then $P(A)=\textbf{0}$ for some non-zero $P\in\mathbb{Z}[X]$. Assume $A=B+C$ for some $n\times n$ matrices
 $B,C$. Then also $P(B+C)=\textbf{0}$. However,
 $P(X+Y)$ can be expanded
 in a bivariate (non-commutative) polynomial $Q(X,Y)\in \mathbb{Z}[X,Y]$, so that in particular $P(A)=P(B+C)=Q(B,C)$. Now $Q$ does not induce the zero
 function as otherwise putting $C=\textbf{0}$ we would get that $Q(B,\textbf{0})=P(B)=\textbf{0}$ vanishes for all $B$,
 but since the polynomial ring in one variable is not a polynomial identity ring, this implies $P(X)\equiv \textbf{0}$, against our assumption. This means $B,C$ are not weakly algebraically independent. Taking the contrapositive, if $A=B+C$
 holds for some weakly algebraically independent $B,C$, then $A$ is transcendental.

 For (ii),
 given any real $n\times n$ matrix $A$,
 by Theorem~\ref{gdel} there is a dense $G_{\delta}$ set $\mathscr{H}_A\subseteq \mathscr{L}_{n,n}$ consisting of real $n\times n$ Liouville matrices $B$ so that
$A-B\in  \mathscr{L}_{n,n}$ as well.
 We need to show that if 
 $A$ is not weakly algebraic, then
 for some $B$ as above the matrices $B, A-B$ are
 weakly algebraically independent over $\mathbb{Z}[X,Y]$.

For given $A$ and $P\in \mathbb{Z}[X,Y]$ not inducing the zero function over the matrix ring, denote by $\mathscr{G}_{P,A}$ the set of $B\in \mathbb{R}^{n\times n}$ with $P(B,A-B)=\textbf{0}$. Then $\mathscr{G}_{P,A}$ is closed in $\mathbb{R}^{n\times n}$ by continuity. Assume $\mathscr{G}_{P,A}$ has empty interior for all $P$. Then by countability of $\mathbb{Z}[X,Y]$ the
complement of the union $\cup \mathscr{G}_{P,A}$, taken over all $P\in \mathbb{Z}[X,Y]$
not inducing $\textbf{0}$,
is a dense $G_{\delta}$ set. Hence
as $\mathscr{H}_A$ is also dense $G_{\delta}$, the set
$\mathcal{T}_A:=(\cup \mathscr{G}_{P,A})^c\cap \mathscr{H}_A$
is again dense $G_{\delta}$, in particular non-empty.
Then any $B\in \mathcal{T}_A$ is suitable.

So assume otherwise for some non-zero $P\in \mathbb{Z}[X,Y]$ not inducing the 
$\textbf{0}$ function,
the set $\mathscr{G}_{P,A}$ has non-empty interior. Since every entry of $P(B,A-B)\in \mathbb{R}^{n\times n}$ is a multivariate scalar polynomial in the $n^2$ entries of $B$
it is clear that then $\mathscr{G}_{P,A}=\mathbb{R}^{n\times n}$ is the entire matrix set. However,
this means that there is a relation
$P(B,A-B)=\textbf{0}$ for some $P\in \mathbb{Z}[X,Y]$ not inducing 
the $\textbf{0}$ function, 
and every $B$. 
However, we can expand 
$P(Y,X-Y)$ into a bivariate polynomial in (non-commutative) standard 
form $R(X,Y)$, $R\in \mathbb{Z}[X,Y]$, so that in particular
$P(B,A-B)=R(A,B)$ for any $B$. On the one hand,
$R(A,B)=P(B,A-B)=\textbf{0}$ for all $B$. On the other hand,
$R$ is not inducing the zero function either. Indeed, if so this would mean
$R(C_0, C_1)=P(C_1, C_0-C_1)=\textbf{0}$ for all $C_0, C_1\in \mathbb{R}^{n\times n}$,
implying further that for any $D_0, D_1\in \mathbb{R}^{n\times n}$ if we let $C_1=D_0, C_0=D_0+D_1$ we get $P(D_0, D_1)=P(C_1, C_0-C_1)=R(C_0, C_1)=\textbf{0}$, contradicting 
the assumption that $P$ is not inducing the $\textbf{0}$ function.
Combining these properties, we see that $A$ is weakly algebraic 
(for $\ell=1$),
against our assumption. This argument shows that if $A$ is not weakly algebraic then
there exist weakly algebraically independent $B,C\in \mathscr{L}_{n,n}$ with $B+C=A$.

\section{Proof of Proposition~\ref{pro} (Sketch)}  \label{unten}
We show that the Liouville property
is invariant under the maps $A\to RA$,
$A\to AR$, for 
regular $R\in \mathbb{Q}^{n\times n}$,
$A\to A+T$ for $T\in \mathbb{Q}^{n\times n}$ and $A\to A^{-1}$ for regular $A$. Then clearly it is preserved for the matrices in the proposition that arise from composition of these operations.
For $A\to RA$
it suffices to
modify good approximation pairs  
$(\textbf{p},  \textbf{q})\in \mathbb{Z}^m\times \mathbb{Z}^n$ 
with respect to $A$ via $(\textbf{p}^{\prime},  \textbf{q}^{\prime}):=(MN\cdot R\textbf{p}, MN\cdot \textbf{q})$, for $M,N\in \mathbb{Z}$ 
the common denominators of the rational entries of $R,S$. Similarly for $A\to AR$
we take $(\textbf{p}^{\prime},  \textbf{q}^{\prime}):=(M^{\prime}\cdot \textbf{p}, M^{\prime}\cdot R^{-1} \textbf{q})$ for $M^{\prime}\in \mathbb{Z}$
the common denominator of the rational matrix $R^{-1}$.
For $A\to A+T$ with a rational matrix $T$,
write $T=T^{\prime}/N^{\prime}$ with an integer
matrix $T^{\prime}$ and an integer $N^{\prime}$. Then we take $(\textbf{p}^{\prime},  \textbf{q}^{\prime}):=(N^{\prime}\cdot T^{\prime}\textbf{p}+ N^{\prime}\textbf{p}, N^{\prime}\cdot \textbf{q})$.
For $A\to A^{-1}$ we take the reversed
vector $(\textbf{p}^{\prime},  \textbf{q}^{\prime}):=(\textbf{q}, \textbf{p})$.
Moreover, using $R_i$ are invertible,
it is clear that \eqref{eq:tiere} is preserved for the obtained matrices.
We leave the details to the reader. 

\section{Final remarks on  Theorem~\ref{1} } \label{s4}

We believe the metrical bound in Theorem~\ref{1} can be improved at least to $(n-2)^2+2$. As the full set $\mathscr{L}_{n,n}$ has Hausdorff dimension $n(n-1)$, see~\cite{bv}
for a considerably 
more general claim,
in particular this would be optimal for $n=2$. We sketch the proof of a special case.
Let us restrict to finitely many polynomials $f_k(z)= c_{0,k}+\cdots+c_{J,k} z^J$, $1\le k\le K$,
of degrees $J=J(k)$ at least two. 
For $n=2$, to obtain this result we consider the larger class of matrices 
\[
	A=\begin{pmatrix}
	a & b \\
	c & 0
	\end{pmatrix}
	\] 
	where again $b\in \mathscr{L}_{1,1}$ is fixed but $a,c$ are real parameters. Again $A\in \mathscr{L}_{2,2}$ is easily seen as soon as $a$ avoids some countable set. The powers of such matrices will have the form
 	\[
	A^j= \begin{pmatrix}
	v_j & w_j \\
	\ast & \ast
	\end{pmatrix}, \qquad j\ge 1,
	\] 
 with $v_j, w_j$ polynomials in $a,c$
 satisfying the recursions
 \[
	v_j= a r_{j-1}+bc v_{j-2},\qquad   w_j= b v_{j-1}.
	\]
 Now it can be shown with Inverse Function
 Theorem in place of the deep result
 from~\cite{km}, that on a joint non-empty open set $V\subseteq \mathbb{R}^2$ the maps $\theta_k: V\to \mathbb{R}^2$, $1\le k\le K$, defined by
 \[
 \theta_k(a,c)=\left(\sum_{j=0}^{J} c_{j,k} v_j, \sum_{j=0}^{J} c_{j,k} w_j\right)\in\mathbb{R}^2
 \]
 with image the first line of $\textswab{f}_k(A)$, induce local diffeomorphisms onto their open images $\theta_k(V)=:V_k$
 in $\mathbb{R}^2$.
 By a Khintchine type result
 the images contain large (full 
 measure within the total image $V_k$) subsets $\mathscr{S}_k\subseteq V_k\subseteq \mathbb{R}^2$
 of line vectors $\textbf{z}_{k}$ with exponent $\omega^{1\times 2}(\textbf{z}_{k})=2$.
 Intersecting finitely many  preimages $\mathscr{S}:= \cap \theta_k^{-1}(\mathscr{S}_k)$ over $1\le k\le K$, by the locally Lipschitz property of $\theta_k^{-1}$, we still get
 a full measure set
 $\mathscr{S}\subseteq V\subseteq \mathbb{R}^2$ within $V$. For any pair $(a,c)\in \mathscr{S}$ with corresponding matrix $A$, 
 as $\textbf{z}_k$ is the first row of $\textswab{f}_k(A)$, by Proposition~\ref{hauser} we have 
 $\textswab{f}_k(A)\notin \mathscr{L}_{2,2}$, in fact
 \[
 \omega^{2\times 2}(\textswab{f}_k(A))\le \omega^{1\times 2}(\textbf{z}_k)= 2,\qquad 1\le k\le K.
 \]
This finishes the proof for $n=2$. The extension to larger $n$ works by considering diagonal blocks analogously to the proof
 of Theorem~\ref{1}. This method also allows us to show that regular matrices $A$ satisfy
 this relaxed version of 
 Theorem~\ref{1}.
 
 Presumably the case of countably many analytic functions not of the form \eqref{eq:four},
 as in Theorem~\ref{1}, instead of finitely many polynomials,
 can be treated with some refined argument.
 The finiteness is only used in the above argument to guarantee
 a joint open set for all functions $f_k$ simultaneously to apply Inverse Function Theorem on; as for transitioning to (almost) arbitrary
 analytic functions, it may be intricate
 to verify the regularity 
 hypothesis of the Inverse Function Theorem
 in this general setup. It may happen that some more analytic functions have to be excluded to achieve this.

\end{document}